\documentclass[12pt]{article}

\usepackage[a4paper, margin=1in]{geometry}
\usepackage{changepage}
\usepackage[hidelinks]{hyperref}

\usepackage{amsmath, amsfonts, amssymb}
\usepackage{dsfont, bbm}
\usepackage{stmaryrd}

\usepackage{amsthm}

\usepackage{graphicx}
\usepackage{subcaption}
\usepackage{tikz}
\usepackage{pgfplots}
\usepackage{pgfplotstable}
\pgfplotsset{compat=newest}
\usepgfplotslibrary{patchplots}

\usepackage[inline, shortlabels]{enumitem}

\usepackage{xcolor}
\usepackage[draft]{fixme}

\usepackage[authoryear]{natbib}

\usepackage{authblk}
\usepackage{verbatim}

\newcommand{\cb}{{\mathcal B}}

\newcommand{\cd}{{\mathcal D}}

\newcommand{\ce}{{\mathcal E}}
\newcommand{\cf}{{\mathcal F}}

\newcommand{\cp}{{\mathcal P}}

\newcommand{\crr}{{\mathcal R}}

\newcommand{\E}{{\mathbb E}}
\newcommand{\F}{{\mathbb F}}

\newcommand{\N}{{\mathbb N}}
\renewcommand{\P}{{\mathbb P}}

\newcommand{\R}{{\mathbb R}}

\newcommand{\dom}{{\mathrm{dom}}\;}

\newcommand{\id}{\mathrm{Id}}

\newcommand{\inv}[1]{\mathop{\frac{1}{#1}}\nolimits}

\newcommand{\indep}{\mathrel{\perp \!\!\! \perp}}

\newcommand{\ind}{\mathbf{1}}
\newcommand{\rd}{\mathrm{d}}

\newcommand{\Cb}{C^\mathrm{cb}}
\newcommand{\Cip}{C^\mathrm{ip}}

\newcommand{\nsamples}{50}

\theoremstyle{plain}
\newtheorem{theo}{Theorem}
\newtheorem*{theo*}{Theorem}

\newtheorem{cor}[theo]{Corollary}
\newtheorem*{cor*}{Corollary}
\newtheorem{prop}[theo]{Proposition}
\newtheorem{lem}[theo]{Lemma}

\newtheorem{defi}{Definition}
\theoremstyle{remark}
\newtheorem{rem}{Remark}
\newtheorem{exple}{Example}

\title{Csisz\' ar indices and interpolating copulas}

\author[2]{Cristina Butucea} 
\author[3]{Jean-Fran\c cois Delmas} 
\author[1]{Anne Dutfoy} 
\author[1,2,3]{Antoine Schoonaert} 

\affil[1]{EDF Research \& Development, 91120 Palaiseau, France.}
\affil[2]{CREST, ENSAE, Institut Polytechnique de Paris, 91120 Palaiseau, France.}
\affil[3]{CERMICS, ENPC, Institut Polytechnique de Paris, CNRS, Marne-la-vall\'ee, France.}

\date{}

\begin{document}

\maketitle

\begin{abstract}
We study various properties of $f$-divergences and Csisz\'ar indices between two probability distributions in very general setups for the  convex function $f$ and for the probability distributions. We establish general structural properties of $f$-divergences and show how they are inherited by the associated Csisz\'ar indices, including monotonicity and invariance under suitable transformations. 

We also study the relationship between Csiszár indices and copula representations of random vectors. When the marginal distributions have atoms, the copula representation is not unique and the Csisz\'ar index of the transformed vectors may increase. We build a large family of interpolating copulas which  minimize the Csisz\'ar index and thus preserve the dependence structure of the initial vector.
\end{abstract}

\medskip

\noindent\textbf{Keywords:} Csisz\'ar index \and Copula \and Dependence measure \and $f$-divergence \and Interpolating copulas \and Invariance and monotonicity of Csisz\'ar indices.

\medskip
The research is partially supported by the ANRT fellowship No. 2024/0845.

\section{Introduction}
\label{sec:intro}

Quantifying the discrepancy between probability distributions is a fundamental problem in probability, statistics, and information theory. Among the most general and widely used classes of such measures are the $f$-divergences introduced by \cite{csiszar1964informationstheoretische} and later studied extensively in statistics and information theory, see references monographs such as \cite{cover2006element}, \cite{polyanskiy2025information}.
The $f$-divergences provide a unified framework for many classical divergences, including the Hellinger distance, the Kullback–Leibler, the $\chi^2$, and the Jensen–Shannon divergences. Their convex-analytic structure leads to several important properties that we state here with the particular care of making the setup the most general. 

A natural application of $f$-divergences is the quantification of statistical dependence between two random variables (or vectors) through the Csisz\'ar index. The Csisz\'ar index measures the discrepancy between the product of marginal distributions and the joint distribution of the two random variables (or vectors), and can be seen as the distance to independence of a joint distribution. These indices are often used to characterize or test independence between random vectors. As we show in this work, these indices inherit several structural properties—such as invariance and monotonicity—from the underlying $f$-divergence. In Theorem~\ref{thm:increase-Df}, we show that any measurable transformation on measures decreases the $f$-divergence. Moreover, we provide a necessary and sufficient condition for invariance (equality). 
Such properties need to be well understood in applications were these indices are used as measures of dependence and generalize covariance-based sensitivity indices such as Sobol index.

Moreover, we investigate the connection between  the Csisz\'ar indices and the various copula representations of random vectors. Copulas provide a convenient framework for separating marginal behavior from dependence structure; see for instance the monograph \cite{nelsen2006introduction}. In the case of continuous marginal distributions, it is often stated that the Csisz\'ar index between two random vectors coincides with the $f$-divergence between their joint copula and the product copula. However, this identity may fail when the marginal distributions have atoms, since in that case the copula representation of the joint distribution is no longer unique.

It has already been noticed by~\citet[Theorem~2]{lin2025checkerboard}  that the entropy of the copula of an arbitrary vector is maximized by the checkerboard copula. Their construction uses probability integral transform of the marginal distributions defined by \cite{ruschendorf2013mathematical}  with uniform random variables which are independent for each marginal distribution. 
Building on the checkerboard copula, we introduce in Definition~\ref{defi:interpol-cop} a broader class of interpolating copulas, obtained by probability integral transform with different and possibly dependent uniform random variables for each atom. This construction generalizes the checkerboard approach and provides a flexible family of copulas that interpolate the discrete structure of the marginals while preserving their distributions. 
In Theorem~\ref{thm:interpolating_minimise_fdiv}, we show that the Csiszár index may increase with the choice of copula but it is minimized by these interpolating copulas which preserve the dependence structure. 

The remainder of this paper is organized as follows. 
Section~\ref{sec:general_notation} recalls the general notation used throughout the paper and provides background on convexity and measures. 
Section~\ref{sec:csiszar_divergence} introduces the definitions and properties of $f$-divergences. 
Section~\ref{sec:csiszar_index} is devoted to Csiszár indices, where we establish invariance and monotonicity properties. 
 More technical proofs are deferred to Section~\ref{sec:proof_introduction}.

\section{Notations and definitions}
\label{sec:general_notation}
We  set  $\R^*_+=  (0,  +\infty )$,  $\R^*=\R^*_+  \cup  (-\R^*_+)$  and
$\bar \R=[-\infty , +\infty ]$. For $x\in \R$, we set $x_+=\max(x,0)$
and $x_-=\max(-x, 0)$.

All  the   measures  are   understood  to   be  positive   measures.  Let
$(\Omega, \cf, \mu)$  be a measured space.  Let $f$  a real-valued or an
$\bar  \R$-valued  measurable  function  defined  on  $\Omega$.  We  set
$f_+=  \max(f, 0)$  and  $f_-= -  \min  (f,  0)$.  We  say  that $f$  is
quasi-integrable    if    $\int_\Omega    f_+    \,    \rd    \mu$    or
$\int_\Omega f_- \, \rd \mu$ is finite (notice that $f$ is integrable if
those two integrals are both finite).

Let $(\Omega, \cf)$  and $(E, \ce)$ be a measurable  spaces and $f$
an  $E$-valued   function  defined  on  $\Omega$.    The
function $f$ is measurable if $\sigma(f)\subset \cf$, where
$\sigma(f)=\{  f^{-1}(A)\, \colon\,  A \in  \ce\}$ is  the  $\sigma$-field
generated                 by                $f$, and
bi-measurable if $f(A)\in \ce$ for all $A\in \cf$.

When $\mu$ is  a probability measure, we shall  see measurable functions
defined on $\Omega$ as random variables and thus write $\E_\mu[g]$, when
it is  meaningful, for the  integral of the $\bar  \R$-valued measurable
function $g$ defined  on $\Omega$ w.r.t.\ the  probability measure $\mu$
and $\E_\mu[g\, |\,  f]$ for the expectation  conditionally on the
$\sigma$-field  $\sigma(f)$   (it  is defined  $\P_\mu$-a.s.,
$\sigma(f)$-measurable  and  can  be  written  as  a  function  of
$f$, that is, $\E_\mu[g\, |\, \varphi]=g'\circ f$, with $g'$
a $\bar \R$-valued measurable function defined on $E$).

Let $\id$ denote the identity function and $\ind_A$ the indicator function of the set $A$.

\subsection{On convexity}
\label{sec:on_convexity}
 A  function  $\varphi$ defined  on $\R^d$  is  a proper  convex
function  if  $\varphi(\R^d)  \subset (-\infty,  +\infty]$,  there  exists
$x\in \R^d$ such that $\varphi(x)\in \R$ and $\varphi$ is convex, that is,
for all $ x,y\in \R^d$ and $ \alpha\in [0, 1]$:
\begin{equation}
  \label{eq:def-proper-cvx}
\varphi(\alpha x +  (1-\alpha) y) \leq  \alpha \varphi(x) + (1-\alpha)
\varphi (y).
\end{equation}
Let  $\varphi$ be  a  proper  convex function  defined  on  $\R$. It  is
measurable and continuous on the interior of its domain, where the domain
of a function $\varphi$ is defined by:
\[
  \dom(\varphi)=\varphi^{-1}(\R).
\]
The  limit  of  $\varphi$  at  $+\infty$  (resp.  $-\infty$)  exists  in
$\bar  \R=[-\infty ,  +\infty ]$  and is  denoted by  $\varphi(+\infty)$
(resp. $\varphi(-\infty)$); thus  a proper convex function  on $\R$ will
be  understood to  be  also  defined on  $\bar  \R$  by continuity.  The
function  $\varphi$  is  strictly  convex   at  $z\in  \R$  if  for  all
$x,y\in   \R\setminus\{z\}$   and   $\alpha   \in   (0,1)$   such   that
$z = \alpha x +  (1- \alpha) y$ the inequality~\eqref{eq:def-proper-cvx}
is strict; in this case we also have $z\in \dom(\varphi)$.
Notice we don't assume $\varphi$ to be continuous at finite boundary
point of its domain.

\medskip

For  an   $\bar  \R$  valued   random  variable   $X$,  we  say   it  is
quasi-integrable  if  $\E[X_+]$  or  $\E[X_-]$ is  finite  (notice  that
$X$ is integrable if  and only if both $\E[X_+]$ and $\E[X_-]$  are finite, and
$X$ is finite a.s.). We give the Jensen inequality for quasi-integrable
random variable in  Lemma~\ref{lem:equality_jensen}, Section~\ref{sec:proof_introduction}.

\subsection{On measures}

Let $\nu$ and $\mu$ be  measures on $(\Omega, \cf)$. They
are mutually singular, denoted by  $\nu \perp \mu$,  if there exists $A\in \cf$ such that
$\nu(A)=\mu(A^c)=0$. The measure $\nu$ is  absolutely continuous
w.r.t.\ $\mu$, denoted by $\nu\ll \mu$,
if for all $A\in \cf$ such that $\mu(A)=0$, then we have $\nu(A)=0$. The
measures $\nu$ and $\mu$ are mutually absolutely continuous if $\nu \ll
\mu$ and $\mu \ll \nu$.
Let $M_+$ denote the set of $\sigma$-finite measures on $(\Omega, \cf)$. For $\mu, \nu \in M_+$,  the Radon-Nikodym
theorem asserts that if $\nu \ll \mu$, then there exists a nonnegative
measurable function $h$, denoted by $\rd \nu / \rd \mu$, defined on
$\Omega$ such that $\nu =h \mu$.  Notice the function $h$ is uniquely  defined up
to a measurable set of zero $\mu$-measure.

For   $\mu,  \nu\in   M_+$,   by  considering   the  reference   measure
$\lambda=\mu  +\nu$, we get that  $\mu$  and $\nu$  are absolutely  continuous
w.r.t.\  $\lambda$ and  $\sigma$-finite. Thus, there  exists two  nonnegative
measurable functions  $p$ and $q$  (uniquely defined up to  a measurable
set   of   zero   $\lambda$-measure)  such   that   $\nu=p\lambda$   and
$\mu=q \lambda$. By construction, we have  $p+q=1$ $\lambda$-a.e., and, with the convention
$0/0=\infty /\infty =0$ and setting $h=p/q$ we get:
\begin{equation}
  \label{eq:Leb-decomp}
  \rd \nu = h \, \rd \mu + \ind_{\{h=+\infty \}}\, \rd \nu
  \quad\text{and}\quad
  \rd \mu = \inv{h} \, \rd \nu + \ind_{\{h=0 \}}\, \rd \mu.
\end{equation}
The function $h$, which is   defined $(\mu+\nu)$-a.e.\ and takes values
in $[0, +\infty ]$, is uniquely
characterized by the Lebesgue decomposition~\eqref{eq:Leb-decomp}; we
also have  $\mu(h=+\infty )=0
$ and $\nu(h=0)=0$.
For example, we have $\nu \ll \mu$ if
and only if $\nu(h=+\infty )=0$, and in this case one usually writes $h=
\rd \nu/ \rd \mu$ (with $h$ defined $\mu$-a.e.).
With an  abuse  of
notation, \textbf{even if $\mu, \nu\in  M_+$ are not mutually absolutely
  continuous}, we write:
\begin{equation}
  \label{eq:Leb-dd2}
  \frac{\rd \nu}{\rd \mu}= h
  \quad\text{and}\quad
  \frac{\rd \mu}{\rd \nu}=\inv{  h}
  \quad \text{with $h$ defined $(\mu+\nu)$-a.e..}
\end{equation}

\medskip

Let $(\Omega, \cf)$  and $(E, \ce)$ be a measurable  spaces and $\varphi$
an  $E$-valued   function  defined  on  $\Omega$.    We  say
that an $\bar \R$-valued measurable  function $f$ defined on $\Omega$ is
$\varphi$-measurable   if   $\sigma(f)   \subset   \sigma(\varphi)$   or
equivalently, there  exists an $\bar \R$-valued  measurable function $g$
defined on  the measure space  $(E,\ce)$ such that $f=g  \circ \varphi$.
If $\varphi$ is measurable (that is, $\sigma(\varphi) \subset \cf$), the
push-forward    measure   of    $\mu$   by    $\varphi$,   denoted    by
$\varphi_\sharp  \mu$  or $\mu\circ  \varphi^{-1}$,  is  the measure  on
$(E, \ce)$ defined by  $\varphi_\sharp \mu(A)=\mu (\varphi^{-1}(A))$ for
all $A\in  \ce$. In particular, when $(E, \ce)=(\Omega, \cf)$, we say that $\varphi$ is measure
preserving for $\mu$ if $\varphi_\sharp \mu=\mu$.

For  $g$ a non-negative  measurable function  defined on
$E$, we have:
\begin{equation}
   \label{eq:chgt-var}
  \int_\Omega g\circ \varphi \, \rd \mu= \int_ E g \, \rd \varphi_\sharp
  \mu.
\end{equation}
When $\mu$ is a probability measure, and $X$ a random variable defined on $\Omega$, its distribution $X_\sharp \mu$ is simply denoted by $P_X$.

\medskip

We say that a $[0, +\infty ]$-valued map $K$, also denoted $K(x, \rd
y)$
is a kernel on $\Omega\times E$
if for all $x\in \Omega$, the map $A\mapsto K(x,A)$ defined on $\ce$ is a  measure, and for all $A\in \ce$ the
   function $x\mapsto K(x, A) $ defined on the measurable space
   $(\Omega, \cf)$ is measurable. The kernel $K$ is a Markov or a probability kernel
   if $K(x, E)=1$ for all $x\in \Omega$.

Let $X$ and $Y$ be two random variables defined on the same probability space.
The  conditional distribution of
$Y$ given $X$, when it  exists  (this is for example the case  if $Y$ takes values
in  a  Polish space,  see~\citet[Theorem~1.23]{k:rm}),  denoted by
$P_{Y \vert X}$, is a Markov kernel, and we have:
\begin{equation}
   \label{eq:proba-decomp}
P_{(X,Y)}(\rd x, \rd y)= P_X(\rd x) \, P_{Y\vert X=x}(\rd y).
\end{equation}

\section{The $f$-divergence}

\label{sec:csiszar_divergence}

In this section, we recall the general definition of $f$-divergence and the most currently
used particular cases that simplify this definition. We recall the fundamental properties
and also list results on the invariance and on the contraction properties of the $f$-divergence. We
take particular care to state these results under weakest sets of assumptions.

Let $\F$ be the set of proper convex functions $f$ on $\R$ such that
$\dom(f) \subset \R_+$, $f$ is right continuous at $0$, $f(1)=0$ and $1$ belongs to the
interior of $\dom (f)$.

Notice that $f(0)=\lim_{t \rightarrow 0+}  f(t)$ belongs to $(-\infty, + \infty]$.
For $f\in \F$, we uniquely define its  convex conjugate $f^*\in \F$ by:
\begin{equation}
\label{eq:def_convex_conjugate}
    f^*(t) = t f\left(\inv{t}\right)
\quad\text{for}\, t\in \R_*^+.
\end{equation}
Notice that  $f^*(0)= \lim_{t  \to +\infty}  f(t)/t $,  and that  if the
function    $f$    is   differentiable    on    $(1,    +\infty   )\subset
\dom (f)$,    then
$f^*(0)= \lim_{t  \to \infty}  f'(t)$ (also  denoted by  $ f'(\infty)$).
Considering  proper  convex  functions  is motivated  by  convex  duality
approaches  for the study of $f$-divergences, see~\cite{agrawal2021optimal}  in  this
direction.

\begin{rem}
    The conjugate function $f^*$ of $f$ shall not be confused with  the
    Fenchel conjugate of $f$ which is defined on $\R$ by $t\mapsto  \sup_{x \in \R } tx - f(x)$.
\end{rem}
Let  $\cp=\cp(\Omega, \cf)$  be the  set  of probability  measures on  a
measurable space $(\Omega, \cf)$. We shall assume that
 $\cf$ is  not
      reduced  to the  trivial  $\sigma$-field $\{\emptyset,  \Omega\}$,
      that is, $\cp$ is not a singleton.

\begin{defi}
Let  $P, Q\in \cp$ be two probability
measures.
Using the convention given in~\eqref{eq:Leb-dd2} where $\rd P/\rd Q$ is
defined $(P+Q)$-a.e.,  the $f$-divergence,
for $f\in \F$,  between
$P$ and $Q$ is defined by:
\begin{equation}
\label{eq:definition_df0}
 D_f(P \Vert Q) = \int_\Omega f\left( \frac{\rd P}{\rd Q} \right) \rd
 Q + f^*(0) \, P \left( \frac{\rd P}{\rd Q} =+\infty \right),
 \end{equation}
 with the convention that the last term $f^*(0) \, P \left( {\rd P}/{\rd Q}
   =+\infty \right)$ is zero   if $P\ll Q$. In particular, we have the usual definition if $Q$ dominates $P$:
 \[
   P\ll Q \implies D_f(P \Vert Q) = \int_\Omega f\left( \frac{\rd P}{\rd Q} \right) \rd
   Q .
 \]
\end{defi}
We can rewrite the  $f$-divergence as follows using  a $\sigma$-finite
measure $\lambda$ on $(\Omega, \cf)$ which dominates  $P$ and $Q$ (that
is, $P+Q \ll \lambda$). Denoting  by $p$ and $q$ the corresponding
densities of $P$ and  $Q$ w.r.t.\ $\lambda$, we get:
\begin{equation}
\label{eq:definition_df}
D_f(P\Vert Q) = \int_{q>0} f\left(\frac{p}{q}\right)\, q \, \rd \lambda+f^*(0)\int_{q=0}  p \, \rd \lambda ,
\end{equation}
with the convention that $f^*(0)\int_{q=0}  p \, \rd \lambda=0$ \ if $P(q=0)=0$.
Note that the quantity $D_f(P\Vert Q)$ is well defined by
Remark~\ref{rem:jensen} on Jensen inequality as ${\rd P}/{\rd Q}$ belongs to $L^1(Q)$ and
that it belongs to $(-\infty , +\infty ]$.

Using~\eqref{eq:def_convex_conjugate}
and~\eqref{eq:definition_df}, we can also rewrite  $D_f(P\Vert Q)$
in a symmetric way:
\begin{equation}
   \label{eq:decom_Df}
  D_f(P\Vert Q)
= \int_{0\leq p<q} f\left(\frac{p}{q}\right) \, q  \, \rd \lambda
     + \int_{0\leq  q<p} f^*\left(\frac{q}{p}\right)\, p \, \rd \lambda.
\end{equation}

\begin{rem}[Domain of $f\in \F$]
  \label{rem:domain}
Let a  proper convex function $f$ on $\R$ with domain in $\R_+$ be such that
  $f(1)=0$ and 1 does not belong to the interior of $\dom(f)$. In such a case
  $\dom(f)$ is either a subset of $[1, +\infty )$ or of $ [0,
  1]$, with $f^*$ still defined by~\eqref{eq:def_convex_conjugate}. We
  deduce from~\eqref{eq:decom_Df}  that either the first term or the second term
   on the right-hand side is infinite, meaning that $  D_f(P\Vert Q)=+\infty $, as soon as
  $P\neq Q$. Thus, the assumption we made at the beginning of the section that $1$ belongs to the
  interior of $\dom(f)$ for  $f\in \F$ allows to avoid such trivial cases.
\end{rem}

\begin{rem}[$f\in \F $ affine]
  \label{rem:affine}
We recall that $f\in \F$ is affine if  $f=c(\id -1)$ on $\R_+$ for some
$c\in \R$.
  For $f\in \F$, it is easy to check that the quantity $(f+f^*)(0)$ belongs to
$[0, +\infty ]$ and  that:
\[
  f\quad\text{is affine}\quad
  \Longleftrightarrow\quad
  (f+f^*)(0)=0.
\]
Indeed, $(f+f^*)(0)= \lim_{\varepsilon \rightarrow 0+} (1+\varepsilon)^{-1}
 f(\varepsilon) + \varepsilon (1+\varepsilon)^{-1} f(1/\varepsilon) \geq
 f(1)=0$ by the convexity of $f$, and the equality holds if and only if
 $f$ is affine on $[\varepsilon, 1/\varepsilon]$ for any $\varepsilon>0$.

Furthermore if $f\in \F$ is affine,  we get
that  $f^*(t) = t \cdot c (\frac 1t - 1) =-f(t)$ on $\R^+$. We deduce
from~\eqref{eq:decom_Df} with $\lambda=P+Q$
that $   D_f(P\Vert Q) = - c \int (p-q)_- \rd \lambda+ c \int (p-q)_+ \rd \lambda=0$ for all $P, Q\in \cp$.
Indeed, $\int(p-q) \rd \lambda = 0$. In conclusion, the $f$-divergence is invariant if we add
to $f$ an arbitrary affine function.
\end{rem}

\medskip

\begin{rem}{\bf (On the second term in the right-hand side sum
  of~\eqref{eq:definition_df0} and~\eqref{eq:definition_df}) } 
Consider  two probability measures, $P$ and
$Q$, on a measured space $(\Omega,\cf,\lambda)$ with respective  densities  $p$ and $q$ w.r.t.\ the reference measure $\lambda$.
When $P$ is not absolutely continuous w.r.t.\ $Q$, the
addition of the second term on the right-hand side term in~\eqref{eq:definition_df}
is necessary to ensure the $f$-divergence is indeed
nonnegative, see Lemma~\ref{lem:f_div_properties}~\ref{enum:f_div_prop_nonnegativity}
below. This is also motivated by the following argument,
see~\cite{csiszar1964informationstheoretische}, where
for $x\in \{p>0, q=0\}$ one intuitively has:
\[
q(x) \, f\left( \frac{p(x)}{q(x)}\right) = \lim_{\varepsilon \to 0^+}
\varepsilon \, f\left( \frac{p(x)}{\varepsilon}\right) = p(x) \lim_{u
  \to +\infty} \frac{f(u)}{u} = p(x) \, f^*(0).
\]
\end{rem}

According to Section~2.2 in \cite{rahman2016f}, we have the following
Lemma. Recall that the $\sigma-$algebra  $\cf$ is  not
      reduced  to the  trivial  $\sigma$-field $\{\emptyset,  \Omega\}$.
\begin{lem}[Properties of the $f$-divergence]
  \label{lem:f_div_properties}

  Let $f\in \F$ (and thus $f(1)=0$) and $P,Q\in \cp$. We have:
\begin{enumerate}[(i)]
    \item \label{enum:f_div_prop_nonnegativity} Nonnegativity: $D_f(P \Vert Q) \geq  0$ and $D_f(P\Vert P)=0$.
    \item \label{enum:f_div_prop_reflexivity} Positivity:  If $f$ is
      strictly convex at $1$, then:
      \[
        D_f(P \Vert Q) = 0
        \quad \Longleftrightarrow\quad
        P = Q.
        \]
    \item \label{enum:div_prop_supremum} Supremum: $D_f(P \Vert Q) \leq
      (f+f^*)(0) $, and  $D_f(P \Vert Q) = (f+f^*)(0)$ if $P$ and $Q$ are mutually singular measures, $P \perp Q$.
    \item   \label{enum:div_prop_condition_sup}    Condition   for   the
      supremum:   If  $(f+f^*)(0)$   is   positive   and  finite,
      then:
      \[
        D_f(P \Vert Q) = (f+f^*)(0)
        \quad \Longleftrightarrow\quad
        P \perp Q.
        \]
    \item \label{enum:f_div_prop_duality} Duality: $D_f(P \Vert Q) = D_{f^*}(Q\Vert P)$.
    \item \label{enum:f_div_prop_invariance} Invariance up to affine functions: Let   $g = f +
      c(\id-1)$ with $c\in \R$, then 
      $$
      g\in \F \quad \text{and}  \quad D_{g}(P\Vert Q) = D_{f}(P\Vert Q).
      $$
    \item  \label{enum:f_div_prop_symmetry} Symmetry:   $D_f(P  \Vert Q) = D_{f}(Q\Vert  P)$ for all $P,Q\in  \cp$ if
      and  only if  there exists  $h\in  \F$ and  $c \in  \R$ such  that
      $f=h+h^* + c(\id-1)$. Then, we also have $\dom(f)=\dom(f^*)$.

    \item \label{enum:f_div_prop_range} Range:
  If  $\R_+^*\subset \dom(f)$, then
  $\{D_f(P\Vert Q) \, \colon\, P,Q\in \cp\}= [0, (f+f^*)(0)]
  $.
\end{enumerate}
\end{lem}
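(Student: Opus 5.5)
Everything will be reduced to the symmetric representation~\eqref{eq:decom_Df} with the reference measure $\lambda=P+Q$, so that $p+q=1$ $\lambda$-a.e., together with the Jensen inequality of Lemma~\ref{lem:equality_jensen} for quasi-integrable variables and its equality case.

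\emph{Nonnegativity.} Set $r=p/q$ on $\{q>0\}$ and $\alpha=P(q=0)\in[0,1]$. Then
\[
\int_{q>0} r\, q\, \rd\lambda = 1-\alpha .
\]
When $\alpha<1$, apply Jensen under the probability $q\,\rd\lambda$ (which has mass $1$, since $q=0$ only where $p$ lives). This gives $\int_{q>0} f(r)q\,\rd\lambda\ge f(1-\alpha)$, so $D_f(P\Vert Q)\ge f(1-\alpha)+\alpha f^*(0)$. Since $f(t)/t$ increases to $f^*(0)$, convexity gives $f(1)\le f(1-\alpha)+\alpha f^*(0)$, that is, $f(1-\alpha)+\alpha f^*(0)\ge 0$. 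When $\alpha=1$ we have $D_f(P\Vert Q)=f(0)+f^*(0)\ge0$ by Remark~\ref{rem:affine}. Finally, $D_f(P\Vert P)=f(1)=0$.

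\emph{Positivity.} If $f$ is strictly convex at $1$ and $D_f(P\Vert Q)=0$, all the inequalities above are equalities. The inequality $f(1-\alpha)+\alpha f^*(0)\ge0$ is then tight, and strict convexity at $1$ forces $\alpha=0$; the only delicate point is the case $f^*(0)=+\infty$, where $\alpha>0$ already makes the divergence infinite. The equality case of Jensen then forces $r$ to be constant $Q$-a.s., hence $r=1$ and $P=Q$.

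\emph{Supremum.} On $\{p<q\}$, write $p/q=t\in[0,1)$. Convexity on the segment $[0,1]$ gives $f(t)\le (1-t)f(0)$, so the first integral in~\eqref{eq:decom_Df} is at most $f(0)\int(q-p)_+\,\rd\lambda$. Symmetrically, $f^*(s)\le(1-s)f^*(0)$ bounds the second integral by $f^*(0)\int(p-q)_+\,\rd\lambda$. Both integrals equal the total variation distance $\delta\le1$, so
\[
D_f(P\Vert Q)\le \delta\,(f+f^*)(0)\le (f+f^*)(0).
\]
If $P\perp Q$, then $p/q\in\{0,\infty\}$ and $\delta=1$, so the formula gives equality directly.

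\emph{Condition for the supremum.} This is the main obstacle, since one must check where the chain is tight. If $0<(f+f^*)(0)<\infty$, equality forces $\delta=1$, which is exactly $P\perp Q$. For this I also need that equality in $f(t)\le(1-t)f(0)$ cannot be used to bypass $\delta=1$. It cannot: the bound $\delta(f+f^*)(0)$ is already strictly below $(f+f^*)(0)$ whenever $\delta<1$.

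\emph{Duality.} Exchanging $p$ and $q$ in~\eqref{eq:decom_Df} and using $f^{**}=f$ gives $D_f(P\Vert Q)=D_{f^*}(Q\Vert P)$.

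\emph{Invariance up to affine functions.} Adding $c(\id-1)$ preserves properness, convexity, the domain and the value at $1$, so $g\in\F$. Remark~\ref{rem:affine} gives $D_{c(\id-1)}=0$, and $D_{f+g}=D_f+D_g$ holds when the terms are not $+\infty-\infty$ (here $D_{c(\id-1)}$ is finite). Hence $D_g(P\Vert Q)=D_f(P\Vert Q)$.

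\emph{Symmetry.} For sufficiency, $h+h^*$ is self-conjugate, so the claim follows from duality and invariance. For necessity, test symmetry on two-point measures $P=(a,1-a)$, $Q=(b,1-b)$. This yields a functional identity which, in the limit and by comparing with the values at points, shows that $f-f^*$ is affine, say $f-f^*=2c(\id-1)$. Then $h=(f-c(\id-1))/2$ works, and $\dom(f)=\dom(f^*)$ follows.

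\emph{Range.} Use two-point measures $P_s=(1,0)$ and $Q_s=(s,1-s)$ together with mixtures. The map $s\mapsto D_f$ is continuous on $(0,1)$ because $f$ is finite and continuous on $\R_+^*$. The divergence is $0$ at $P=Q$, and $(f+f^*)(0)$ is attained (or approached, then attained by singular measures). An intermediate value argument along the path $P_t=tP_0+(1-t)Q_0$ with $P_0\perp Q_0$ fills the whole interval $[0,(f+f^*)(0)]$.
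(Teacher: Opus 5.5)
\textbf{Gap in (viii).} Your items (i)--(vi) follow the paper's proof: Jensen plus Lemma~\ref{lem:cvx-prop}, then the chord bounds $f(t)\le(1-t)f(0)$ and $f^*(t)\le(1-t)f^*(0)$ inserted in~\eqref{eq:decom_Df}. The range argument (viii), however, has a genuine gap.

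Every family you propose pairs measures that are not mutually absolutely continuous. So $\rd P/\rd Q$ equals $0$ or $+\infty$ on a set of positive mass, and continuity of $f$ on $\R_+^*$ says nothing there. Concretely:
\begin{itemize}
\item $D_f\big((1,0)\Vert(s,1-s)\big)=f^*(s)+(1-s)f(0)$;
\item for $P_t=tP_0+(1-t)Q_0$ with $P_0\perp Q_0$, $D_f(P_t\Vert Q_0)=f(1-t)+tf^*(0)$ and $D_f(Q_0\Vert P_t)=tf(0)+f^*(1-t)$.
\end{itemize}
Now take $f(0)=f^*(0)=+\infty$, e.g.\ the Jeffreys divergence $f(t)=(t-1)\log t$, which lies in $\F$ with $\R_+^*\subset\dom(f)$. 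Each of these quantities is then $+\infty$ for every parameter value except the one giving $P=Q$. So no intermediate value argument along them produces a single value in $(0,+\infty)$. When only one of $f(0)$, $f^*(0)$ is infinite, choosing the orientation of the mixture path correctly does work, but you do not address this, and the doubly infinite case is out of reach.

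The missing idea is to keep both measures charging both points, so that all density ratios stay in $\R_+^*$. Take $P_s,P_t$ as in~\eqref{eq:Df=B(s,t)} and fix $u=t/s\in(0,1)$. Then $s\mapsto sf^*(u)+(1-su)f\big(\frac{1-s}{1-su}\big)$ is finite and continuous on $(0,1)$. It tends to $0$ as $s\to0$ and to $f^*(u)+(1-u)f(0)$ as $s\to1$. Hence $(0,f^*(u)+(1-u)f(0))$ lies in the range, and letting $u\downarrow0$ gives $(0,(f+f^*)(0))$. The endpoints come from (i) and (iii).

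\textbf{Symmetry (vii).} Your route is the paper's, but the two facts that make the limit work are left implicit.
\begin{itemize}
\item Taking $s$ small in the two-point identity $sf^*(u)+(1-t)f(w)=sf(u)+(1-t)f^*(w)$, with $w=\frac{1-s}{1-t}$ near $1$ (so $f(w),f^*(w)$ are finite), shows that $f(u)=+\infty$ if and only if $f^*(u)=+\infty$. You need this before writing $f-f^*$ at all.
\item The limit $s\to0$ only yields $(f^*-f)(u)=(u-1)(f^*-f)'(1+)$ for $u>1$ and $(u-1)(f^*-f)'(1-)$ for $u<1$. The two slopes agree because $(f^*)'(1\pm)=-f'(1\mp)$. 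Without this you only get a function that is affine on each side of $1$.
\end{itemize}

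\textbf{Minor slip in (i).} $f(t)/t$ is not monotone in general; for $f(t)=(t-1)^2$ it decreases on $(0,1)$. What increases to $f^*(0)$ is the chord slope $(f(t)-f(x))/(t-x)$ in $t$. That is what Lemma~\ref{lem:cvx-prop} uses to obtain $f(1)\le f(1-\alpha)+\alpha f^*(0)$.
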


For   the   readers'   convenience,   we  provide   a   short   proof   in
Section~\ref{sec:proof_introduction}.

\begin{rem}[Restriction to dominated probability measures]
  \label{rem:ext-to-Pl}
  The                          properties {\it ~\ref{enum:f_div_prop_symmetry}}
  and{\it ~\ref{enum:f_div_prop_range}}  hold  also  with  $\cp$  replaced  by
  $\cp_\lambda\subset\cp$ the  subset of probability measures  which are
  dominated  by a  non-trivial  $\sigma$-finite  measure $\lambda$.  See
  Remark~\ref{rem:PL-ext} below for a proof.
\end{rem}

\begin{rem}[Examples of $f$-divergence]
   \label{rem:exple-Df}
   We  provide  in Table~\ref{tab:list_of_fdivergence}  some  well-known
   $f$-divergences,  see~\cite{ca:abc-divergence},  and we  simply  write
   $D_\star=D_{f_\star}$ for the name $\star$ in this table.

\begin{table}[htbp]
    \renewcommand{\arraystretch}{1.25}
    \centering
    \begin{tabular}{|c|c|}
        \hline
        $f$-divergence & Expression of $f$
         \\
        \hline
        Kullback-Leibler divergence & $f_\mathrm{KL}(t) = t \log(t)$
        \\
        Conjugate Kullback-Leibler divergence & $f_{\mathrm{KL}^*}(t) = -\log(t)$
        \\
      Total variation distance & $f_\mathrm{TV}(t) = \vert t-1\vert$ \\
        Squared Hellinger distance & $f_\mathrm{H}(t) = \left(\sqrt{t}-1 \right)^2$\\
        Pearson $\chi^2$ divergence & $f_\mathrm{P}(t)= t^2-1$ \\
        Neyman $\chi^2$ divergence & $f_\mathrm{N}(t)= \frac{1-t^2}{t}$ \\
     $\alpha$-divergence, 
      $\alpha\in \R \setminus \{0,1\}$
                       & $f_\alpha(t) = \frac{t^\alpha -\alpha t
                         -(1-\alpha)}{\alpha (\alpha -1)}$     \\
      Le Cam distance & $f_\mathrm{LC}(t)= \frac{(1-t)^2}{2t+2}$ \\
        Jensen-Shannon divergence & $f_\mathrm{JS}(t)= t \log\left(\frac{2t}{t+1}\right) + \log\left(\frac{2}{t+1}\right)$ \\
        \hline
    \end{tabular}
    \caption{Some examples of $f$-divergences.}
    \label{tab:list_of_fdivergence}
\end{table}

We  have the  conjugate  relation for  the (Conjugate)  Kullback-Leibler
divergence  $f_\mathrm{KL}^*= f_{\mathrm{KL}^*}$, as well as  for  the (Neyman  and
Pearson)  $\chi^2$ divergence  $f_\mathrm{P}^*=  f_\mathrm{N}$; moreover, for  the
$\alpha$-divergence we have $f_\alpha^*=f_{1-\alpha}$.    The  Total  variation
distance, the  Squared Hellinger distance,  the Le Cam distance  and the
Jensen-Shannon  divergence  are  symmetric  as  $f_\star^*=f_\star$  for
$\star\in  \{\mathrm{TV},  \mathrm{H}, \mathrm{LC},  \mathrm{JS}\}$  and
furthermore the Total variation distance  and the square root divergence
$(D_{\star})^{1/2}$                                                    for
$\star\in   \{  \mathrm{H},   \mathrm{LC},  \mathrm{JS}\}$   are  indeed
distances       on       $\cp$.

According       to~\citet[Theorem~1
and~2]{kafka1991powers},  if $f\in  \F$ is  strictly convex  on $\R_+^*$
(thus   $\R_+^*  \subset   \dom(f)$)   with  $f=f^*$   (thus  $f>0$   on
$\R_+\setminus\{1\}$)             and              such             that
$t\mapsto (1-  t^\beta)^{1/\beta}/ f(t)$  is non-increasing on  $[0, 1)$
for  some  $\beta\in  \R_+^*$   (thus  $f(0)=f^*(0)$  is  finite),  then
$(D_f)^{\beta}$  is a  distance  on $\cp$.   Notice  those conditions  are
satisfied for the $f_\star$ with $\star\in \{ \mathrm{H}, \mathrm{LC}\}$
and  $\beta=1/2$. See  also~\cite{ov:2003}  for further  results in  this
direction, including in particular the fact that $(D_{\mathrm{JS}})^{1/2}$
is   indeed  a   distance  (this   appears   as  the   limiting  case   of the function
$t\mapsto \gamma(  \gamma -1)^{-1} \left(  (1+t^\gamma)^{1/\gamma}  - 2^{(1-\gamma)/\gamma}
(1+t)\right)$ as $\gamma\in \R_+^*\setminus\{1\}$ goes to $1$).
\end{rem}

\begin{rem}[$f_\alpha$-divergence and Rényi divergence]
   \label{rem:renyi-div}
   For the $\alpha$-divergence, one can define $f_0$ by continuity at
   $0+$ as $f_0(t)=-\log (t) + (t-1)$ and $f_1$ by continuity at $\alpha=1$ or
   conjugation   $f_1(t)=f_0^*(t)=t\log(t)    +   (1-t)$.    Thanks   to
   Lemma~\ref{lem:f_div_properties}{\it ~\ref{enum:f_div_prop_invariance}} the
   $1$-divergence  corresponds   to  the   Kullback-Leibler  divergence,
   $D_1=D_\mathrm{KL}$   and  the   $0$-divergence   to  the   Conjugate
   Kullback-Leibler divergence.

Let us mention that  the
$f$-divergence associated to $f_\alpha$ and that associated to the function $(t^\alpha -1)/\alpha (\alpha-1)$
coincide thanks to this invariance property. Indeed, $f_\alpha$ and the previous function only differ by an
affine function and they result in the same $f$-divergence. Therefore the  Rényi's divergence
\cite{renyi1961measures} of order
 $\alpha \in (0,+\infty )\setminus \{1\}$ defined by:
 \[
R_\alpha(P\Vert Q) =
\inv{\alpha-1} \log \left( \int_\Omega p^\alpha q^{1-\alpha} \, \rd
  \lambda \right)
 \]
can be written as:
\begin{equation*}
R_\alpha(P\Vert Q)= \inv{\alpha-1} \log \Big( 1+ \alpha(\alpha-1)
  D_{\alpha} (P\Vert Q)\Big),
\end{equation*}
where $p,q$ represent the density of $P, Q\in \cp$ w.r.t.\ a dominating
measure $\lambda$.
By  letting   $\alpha$  go  to  $1$,  we   get  by   continuity  that
$R_1 =D_1 = D_\mathrm{KL}$.  For $\alpha=1/2$, we obtain the  Hellinger
divergence  $D_{1/2}= 2 D_H$,  and $R_{1/2}  $ is  two times  the so-called
Bhattacharyya divergence, see \cite{kailath2003divergence}, which is not a
distance   as  it  does  not  respect  the  triangular
inequality  (but  recall  instead  that $D_H^{1/2}$  is  a  distance).
\end{rem}

We give in  the  following proposition a sufficient condition for the
invariance of the $f$-divergence between the push-forward measures via a measurable function,
whose      proof     is      given  in Section~\ref{sec:proof_introduction}.

\begin{prop}[Invariance of the $f$-divergence]
  \label{prop:inv-gen}
    Let $P,Q$ be two probability measures on $(\Omega, \cf)$. Let $(E, \ce)$
    be a measurable space and $\varphi: \Omega \rightarrow E$ a measurable
    function. If $\rd P/\rd Q$ (in the
    sense of~\eqref{eq:Leb-dd2}), or equivalently $\rd Q/ \rd P$, is
    $\sigma(\varphi)$-measurable then we have  $\rd P/
    \rd Q= (\rd  \varphi _\sharp P/ \rd \varphi_\sharp Q)\circ \varphi$,
    and  for $f\in \F$:
    \[
      D_f(\varphi_\sharp P \, \Vert \, \varphi_ \sharp Q) =  D_f(P \Vert Q).
    \]
\end{prop}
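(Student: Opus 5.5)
We work with the reference measure $\lambda=P+Q$ on $(\Omega,\cf)$ and its push-forward $\varphi_\sharp\lambda=\varphi_\sharp P+\varphi_\sharp Q$, which dominates both image measures. Write $p,q$ for the densities of $P,Q$ w.r.t.\ $\lambda$, chosen with $p+q=1$ everywhere. By assumption $h=\rd P/\rd Q=p/q$ is $\sigma(\varphi)$-measurable.

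\textbf{Step 1: $p$ and $q$ are functions of $\varphi$.} The map $[0,+\infty]\ni x\mapsto x/(1+x)$ is a bijection onto $[0,1]$, with the convention $\infty/(1+\infty)=1$. Since $p+q=1$, we have $p=h/(1+h)$ $\lambda$-a.e., so $p$ (and hence $q=1-p$) can be chosen $\sigma(\varphi)$-measurable. This also shows that $h$ being $\sigma(\varphi)$-measurable is equivalent to $1/h$ being so. We therefore write $p=\tilde p\circ\varphi$ and $q=\tilde q\circ\varphi$, with $\tilde p,\tilde q$ measurable on $E$ and valued in $[0,1]$. We may modify them so that $\tilde p+\tilde q=1$ everywhere on $E$.

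\textbf{Step 2: identify the image densities.} For $A\in\ce$, the change of variables~\eqref{eq:chgt-var} gives
\[
\varphi_\sharp P(A)=\int_\Omega \ind_A\circ\varphi\;\tilde p\circ\varphi\,\rd\lambda=\int_E \ind_A\,\tilde p\,\rd\varphi_\sharp\lambda .
\]
Hence $\tilde p$ is the density of $\varphi_\sharp P$ w.r.t.\ $\varphi_\sharp\lambda$, and likewise $\tilde q$ is the density of $\varphi_\sharp Q$. Since $\tilde p+\tilde q=1$, the reference measure $\varphi_\sharp\lambda$ is exactly $\varphi_\sharp P+\varphi_\sharp Q$. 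The Lebesgue decomposition~\eqref{eq:Leb-decomp} then yields $\rd\varphi_\sharp P/\rd\varphi_\sharp Q=\tilde p/\tilde q$ $(\varphi_\sharp P+\varphi_\sharp Q)$-a.e. Composing with $\varphi$ gives $(\tilde p/\tilde q)\circ\varphi=p/q=h$ $\lambda$-a.e., which is the first claim. The point to check carefully is that ``a.e.\ w.r.t.\ $\varphi_\sharp\lambda$'' pulls back to ``a.e.\ w.r.t.\ $\lambda$''. This holds because the exceptional set is $\ce$-measurable and $\lambda(\varphi^{-1}(N))=\varphi_\sharp\lambda(N)$.

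\textbf{Step 3: equality of divergences.} We use formula~\eqref{eq:definition_df} with $\lambda$ and with $\varphi_\sharp\lambda$. Splitting $f=f_+-f_-$, the change of variables~\eqref{eq:chgt-var} applied to the nonnegative functions $\ind_{\{\tilde q>0\}} f_\pm(\tilde p/\tilde q)\,\tilde q$ and $\ind_{\{\tilde q=0\}}\tilde p$ gives equality of each piece. In particular, $\varphi_\sharp P(\tilde q=0)=P(q=0)$, so the convention on the term $f^*(0)\int_{q=0}p\,\rd\lambda$ applies simultaneously to both sides.

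\textbf{Main obstacle.} The main care is bookkeeping rather than depth. One must handle quasi-integrability: the negative parts are finite by the Jensen remark, so the differences are well defined and coincide. One must also manage the null sets where $h$ fails to be exactly $\sigma(\varphi)$-measurable, which is addressed by choosing good versions in Step 1.
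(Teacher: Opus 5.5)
Your proof is correct and follows essentially the same route as the paper: factor the density through $\varphi$ (Doob--Dynkin), then transfer both the identification of $\rd\varphi_\sharp P/\rd\varphi_\sharp Q$ and the two terms of $D_f$ to $E$ via the change of variables~\eqref{eq:chgt-var}. The only difference is bookkeeping. The paper factors $h$ itself and identifies $h'$ through the uniqueness of the Lebesgue decomposition~\eqref{eq:Leb-decomp}, checked for $\varphi_\sharp P$ against $\varphi_\sharp Q$ and, by symmetry, the reverse. You instead factor the bounded densities $p=h/(1+h)$ and $q=1-p$ w.r.t.\ $P+Q$ and read the image ratio directly off its definition with reference measure $\varphi_\sharp(P+Q)$, which handles both directions at once without appealing to uniqueness.
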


We  directly deduce  the following result.

\begin{cor}[Invariance of the $f$-divergence for random variables]
  \label{cor:inv-gen-XY}
  Let $X,Y$ be  two random variables taking values in $(\Omega, \cf)$
     with respective distribution $P_X$ and $P_Y$. Let $(E, \ce)$ be a
     measurable space and $\varphi$ be a $E$-valued measurable function
     defined on $\Omega$. If $\rd P_X / \rd P_Y$ can be written as a
     measurable function of $\varphi$, then we have for all $f\in \F$:
     \[
        D_f( P_{\varphi(X)} \, \Vert \,  P_{\varphi(Y)})= D_f(P_X \Vert P_Y ).
    \]
\end{cor}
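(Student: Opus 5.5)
The corollary is Proposition~\ref{prop:inv-gen} applied on the probability space carrying $X$ and $Y$. Set $P=P_X$ and $Q=P_Y$; both are probability measures on $(\Omega,\cf)$. Then I identify the push-forwards: the distribution of $\varphi(X)$ is $P_{\varphi(X)}=\varphi_\sharp P_X$. Indeed, for $A\in\ce$,
\[
P_{\varphi(X)}(A)=\P\bigl(X\in\varphi^{-1}(A)\bigr)=P_X\bigl(\varphi^{-1}(A)\bigr),
\]
and $\varphi^{-1}(A)\in\cf$ because $\varphi$ is measurable. In the same way, $P_{\varphi(Y)}=\varphi_\sharp P_Y$.

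Next I translate the hypothesis. To say that $\rd P_X/\rd P_Y$ (in the sense of~\eqref{eq:Leb-dd2}, so defined $(P_X+P_Y)$-a.e.) can be written as a measurable function of $\varphi$ means that there is a $[0,+\infty]$-valued measurable $g$ on $(E,\ce)$ with $\rd P_X/\rd P_Y=g\circ\varphi$ $(P_X+P_Y)$-a.e. A function of the form $g\circ\varphi$ is $\sigma(\varphi)$-measurable. Since $h$ is only defined up to a $(P_X+P_Y)$-null set, I choose the version $g\circ\varphi$ of $h$.

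Proposition~\ref{prop:inv-gen} then applies directly and gives, for every $f\in\F$,
\[
D_f(\varphi_\sharp P_X\Vert\varphi_\sharp P_Y)=D_f(P_X\Vert P_Y),
\]
which is the claim. The one point needing care is the version issue: the proposition's hypothesis concerns a class defined $(P+Q)$-a.e. I read that hypothesis as requiring only that some version be $\sigma(\varphi)$-measurable. The quantity $D_f$ computed through~\eqref{eq:definition_df0} does not depend on which version is used, so this reading is legitimate.
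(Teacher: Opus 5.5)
Your proof is correct and follows the paper's route exactly: the paper says only ``we directly deduce'' this corollary from Proposition~\ref{prop:inv-gen}, applied with $P=P_X$, $Q=P_Y$ and the identifications $P_{\varphi(X)}=\varphi_\sharp P_X$, $P_{\varphi(Y)}=\varphi_\sharp P_Y$. Your remark on choosing the $\sigma(\varphi)$-measurable version is a harmless clarification that the paper leaves implicit; note only that the proposition is applied on the common value space $(\Omega,\cf)$, not on ``the probability space carrying $X$ and $Y$''.
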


Let us consider now  the  case   where  $\rd   P  /   \rd  Q$   is  not
$\sigma(\varphi)$-measurable.  The  next  result implies  in  particular
\citet[Theorem 7.4]{polyanskiy2025information} and does not rely on the
existence of  conditional distribution  but the  proof follows  the same
lines.  The  identification of the  ratio densities of the  push-forward
probability measures and the if and only if condition are part of the folklore; the proof
is postponed to Section~\ref{sec:proof_introduction}.

\begin{theo}[Push-forward measures and  $f$-divergence]
  \label{thm:increase-Df}
  Let $P,Q$ be  two probability measures  on a  measurable space
  $(\Omega,  \cf)$ and  $\varphi$ be  a measurable  function defined  on
  $\Omega$ (taking values in a  general measurable space $(E, \ce)$). We
  have for $f\in \F$ that:
\begin{equation}
   \label{eq:D-phi}
   D_f(\varphi_\sharp P \, \Vert \, \varphi_ \sharp Q) \leq
   D_f(P    \Vert Q)
    \end{equation}
    and, with $\lambda=(P+Q)/2$, we have $\lambda$-a.s.:
 \begin{equation}
   \label{eq:contrib-pq>0}
   \frac{\rd \varphi_\sharp P}{\rd \varphi _\sharp Q}=
   \frac{\E_Q\left[\frac{\rd P}{\rd Q} \, \big |\, \varphi\right]}
   {P\left(\frac{\rd P}{\rd Q}<+\infty  \, \big |\, \varphi\right)}
   \in (0, +\infty )
   \quad\text{on}\quad
        \left\{\E_\lambda\left[\frac{\rd P}{\rd \lambda}\, \frac{\rd
              Q}{\rd \lambda}\, \big |\, \varphi \right]>0
      \right\}.
    \end{equation}

    Furthermore,   if   $f$   is   strictly   convex   on   $\R_+^*$   and
    $ D_f(P \Vert Q)$ is finite, then inequality~\eqref{eq:D-phi} is
    an equality if and only if $\rd P/\rd Q$ is (equal $\lambda$-a.s.\
    to) a measurable function of
    $\varphi$, that  is, there exists a  nonnegative measurable function
    $h$ defined on $E$ such that:
    \[
    \text{$\lambda$-a.s.}\quad \quad  \frac{\rd P}{\rd Q}= h\circ \varphi.
    \]
\end{theo}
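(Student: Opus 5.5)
We work throughout with the reference measure $\lambda=(P+Q)/2$, the densities $p=\rd P/\rd\lambda$ and $q=\rd Q/\rd\lambda$ (so $p+q=2$ $\lambda$-a.e.), and the push-forward $\bar\lambda=\varphi_\sharp\lambda=(\varphi_\sharp P+\varphi_\sharp Q)/2$.

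\textbf{Step 1: densities of the push-forwards.} By the change of variables formula~\eqref{eq:chgt-var}, $\varphi_\sharp P$ has $\bar\lambda$-density $\bar p$ with $\bar p\circ\varphi=\E_\lambda[p\,|\,\varphi]$, and likewise $\bar q\circ\varphi=\E_\lambda[q\,|\,\varphi]$. Hence $(\rd\varphi_\sharp P/\rd\varphi_\sharp Q)\circ\varphi=\E_\lambda[p|\varphi]/\E_\lambda[q|\varphi]$.

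To obtain~\eqref{eq:contrib-pq>0}, rewrite the numerator and denominator by changing measure inside the conditional expectations. Bayes' formula for conditional expectations gives
\[
\E_Q[g\,|\,\varphi]=\E_\lambda[gq\,|\,\varphi]/\E_\lambda[q\,|\,\varphi].
\]
Applying it with $g=p/q$ on $\{q>0\}$ gives $\E_Q[\rd P/\rd Q\,|\,\varphi]=\E_\lambda[p\ind_{q>0}|\varphi]/\E_\lambda[q|\varphi]$. Similarly,
\[
P(\rd P/\rd Q<\infty\,|\,\varphi)=\E_\lambda[p\ind_{q>0}|\varphi]/\E_\lambda[p|\varphi].
\]
Dividing the two expressions yields $\E_\lambda[p|\varphi]/\E_\lambda[q|\varphi]$, which is the claimed formula.

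It remains to justify this on the set $\{\E_\lambda[pq|\varphi]>0\}$. There $\E_\lambda[p\ind_{q>0}|\varphi]>0$, $\E_\lambda[q|\varphi]>0$ and $\E_\lambda[p|\varphi]>0$, so every ratio is well defined and lies in $(0,\infty)$. The main bookkeeping issue is that conditional expectations under $Q$ and $P$ are only defined $Q$-a.s. and $P$-a.s. respectively, whereas the statement is $\lambda$-a.s. This is consistent because on the set in question both conditional masses are positive.

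\textbf{Step 2: the inequality~\eqref{eq:D-phi}.} Use the symmetric form~\eqref{eq:decom_Df} in the perspective form
\[
D_f(P\Vert Q)=\int \Phi(p,q)\,\rd\lambda,\qquad \Phi(a,b)=b f(a/b),
\]
with $\Phi(a,0)=af^*(0)$. The function $\Phi$ is jointly convex and positively homogeneous on $\R_+^2$; it is the lower semicontinuous perspective of $f$. Similarly $D_f(\varphi_\sharp P\Vert\varphi_\sharp Q)=\int\Phi(\bar p,\bar q)\,\rd\bar\lambda=\E_\lambda[\Phi(\E_\lambda[p|\varphi],\E_\lambda[q|\varphi])]$.

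Because $p+q=2$, the pair $(p,q)$ lives on a segment, so $\Phi(p,q)=\psi(p)$ with $\psi(t)=\Phi(t,2-t)$ a proper convex function on $[0,2]$. Applying the conditional Jensen inequality (the conditional version of Lemma~\ref{lem:equality_jensen}) to the bounded variable $p$ gives
\[
\psi(\E_\lambda[p|\varphi])\le\E_\lambda[\psi(p)|\varphi].
\]
Integrating this yields~\eqref{eq:D-phi}. The quasi-integrability needed is automatic, since $\psi$ is bounded below by an affine function and $p$ is bounded.

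\textbf{Step 3: the equality case.} Assume $f$ is strictly convex on $\R_+^*$ and $D_f(P\Vert Q)<\infty$. In that case $\psi$ is strictly convex on $(0,2)$. Equality of the integrals, with finite values, forces equality in conditional Jensen $\lambda$-a.s., which means $p$ is a.s. constant given $\varphi$, i.e. $p=\E_\lambda[p|\varphi]$ is $\sigma(\varphi)$-measurable.

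The endpoints $0$ and $2$, where strict convexity may fail, are the main obstacle; I expect this to be the hardest step. The argument is that if a conditional law charges $\{0,2\}$ as well as other points of $[0,2]$, the Jensen gap is still positive, because $\psi$ is strictly convex in the interior and lower semicontinuous at the endpoints. The remaining case is a conditional law supported only on $\{0,2\}$. There $\psi(0)=2f(0)$ and $\psi(2)=2f^*(0)$ must both be finite (finite divergence), and any genuine mixing of the two endpoints gives a strict inequality, since $\psi$ is strictly convex at interior points. This proves that $p$, hence $\rd P/\rd Q=p/(2-p)$, equals $h\circ\varphi$ $\lambda$-a.s.

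The converse direction is Proposition~\ref{prop:inv-gen}.
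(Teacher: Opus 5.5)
Your proof is correct, and from Step~2 on it takes a genuinely different route from the paper. Step~1 is essentially the paper's own argument: the paper also identifies $(\rd\varphi_\sharp P/\rd\varphi_\sharp Q)\circ\varphi$ with $\E_\lambda[p\,|\,\varphi]/\E_\lambda[q\,|\,\varphi]$, and its representations of $\E_Q[\rd P/\rd Q\,|\,\varphi]$ and $P(\rd P/\rd Q=\infty\,|\,\varphi)$ are exactly your Bayes formulas.

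\textbf{How the paper proves the inequality.} It writes $D_f(\varphi_\sharp P\Vert\varphi_\sharp Q)=A+f^*(0)B$ and splits $\E_\lambda[p\,|\,\varphi]$ into the contributions of $\{q>0\}$ and $\{q=0\}$. It then uses Lemma~\ref{lem:cvx-prop} to absorb a term $C$: the part of the $Q$-singular mass of $P$ that becomes absolutely continuous after push-forward. Next it applies Jensen under $Q$ to $\rd P/\rd Q$, and closes with $B+C=P(\rd P/\rd Q=\infty)$. In the equality case it first gets $C=0$ from the strict part of Lemma~\ref{lem:cvx-prop}. This gives $\{q=0\}=\{\E_\lambda[q\,|\,\varphi]=0\}$ $\lambda$-a.s., which it then combines with the equality case of Jensen under $Q$.

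\textbf{What your route does differently.} Your reduction to the one-dimensional function $\psi(t)=(2-t)f(t/(2-t))$, with $\psi(2)=2f^*(0)$, merges the absolutely continuous and singular parts into one conditional Jensen under $\lambda$.
\begin{itemize}
\item The singular part is the endpoint $t=2$.
\item Lemma~\ref{lem:cvx-prop} is hidden in the convexity of $\psi$ up to $t=2$. That convexity holds because $\psi(2)=\lim_{t\uparrow 2}\psi(t)$, by the very definition of $f^*(0)$.
\item Your argument is shorter and symmetric in $(P,Q)$.
\item The equality case yields $p=\E_\lambda[p\,|\,\varphi]$ directly, so $\rd P/\rd Q=p/(2-p)$ is $\sigma(\varphi)$-measurable with no bookkeeping on $\{q=0\}$.
\end{itemize}
What the paper's route buys in exchange is an explicit picture of where divergence is lost: the term $C$ plus the Jensen gap under $Q$.

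\textbf{Remark on the endpoints.} The endpoint issue you flag as the hardest step is not actually an obstacle.
\begin{itemize}
\item A probability law on $[0,2]$ with mean $0$ or $2$ is automatically a Dirac mass.
\item At any mean in $(0,2)$, $\psi$ is strictly convex in the paper's sense, including chords ending at $0$ or $2$. For chords ending at $2$ this is exactly the strict part of Lemma~\ref{lem:cvx-prop}; it also follows from strict convexity on $(0,2)$ by monotonicity of chord slopes.
\item Hence the equality case of Lemma~\ref{lem:equality_jensen} concludes at once when applied to a regular conditional law of $p$ given $\varphi$. Such a law exists since $p$ is real valued, and it is the same tool the paper invokes.
\end{itemize}
Passing through that regular conditional law is also how the ``conditional version'' of Lemma~\ref{lem:equality_jensen} must be obtained here. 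The usual sup-of-affine-minorants proof fails for non-lower-semicontinuous convex functions.

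\textbf{Minor inaccuracy.} Your $\Phi$ is not the lower semicontinuous perspective in this generality, since $f\in\F$ may jump at a finite right end of its domain. This does not affect the proof, because you only use convexity of $\psi$ on $[0,2]$, which holds regardless.
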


The     formula~\eqref{eq:contrib-pq>0}     is  sufficient   to     give
$ {\rd \varphi_\sharp P}/{\rd \varphi _\sharp  Q}$ when $P \ll Q$. Other
cases   might    be   more    delicate:   in   particular    the   event
in~\eqref{eq:contrib-pq>0}  has   zero  measure  if  $P\perp   Q$,  see
Remark~\ref{rem:inv-gen} below.

\begin{cor}[Push-forward measures when $P \ll Q$]
  \label{cor:increase-Df}
  Let $P,Q$ be  two probability measures  on a  measurable space
  $(\Omega,  \cf)$ such that $P \ll Q$  and  $\varphi$ be  a measurable  function defined  on
  $\Omega$ (taking values in a  general measurable space $(E, \ce)$).
 We have for $f\in \F$ that:
\begin{equation}
   \label{eq:D-phi2}
   D_f(\varphi_\sharp P \, \Vert \, \varphi_ \sharp Q) \leq
   D_f(P    \Vert Q)
\quad\text{and}\quad
  \frac{\rd\varphi_\sharp P}{\rd \varphi_\sharp Q}\circ \varphi =
  \E_Q\left[\frac{\rd
         P}{\rd Q}\,|\, \varphi\right] \quad\text{$Q$-a.s..}
\end{equation}

    Furthermore,   if   $f$   is   strictly   convex   on   $\R_+^*$   and
    $ D_f(P \Vert Q)$ is finite, then the inequality in~\eqref{eq:D-phi2} is
    an equality if and only if  $Q$-a.s.\ $ \E_Q\left[(\rd
         P/ \rd Q) \,|\, \varphi\right]=\rd
         P/ \rd Q$.
\end{cor}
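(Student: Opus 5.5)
We plan to deduce Corollary~\ref{cor:increase-Df} directly from Theorem~\ref{thm:increase-Df}, specialising it to the case $P\ll Q$. The inequality in~\eqref{eq:D-phi2} is exactly~\eqref{eq:D-phi}, which holds for every pair of probability measures, so nothing new is needed there. The work is in identifying the density ratio and in restating the equality condition.

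For the density ratio, set $g=\rd P/\rd Q$. Since $P\ll Q$, we have $P(g=+\infty)=0$, so $P(g<+\infty\,|\,\varphi)=1$ $P$-a.s. The denominator in~\eqref{eq:contrib-pq>0} is therefore $1$, and on the event $G=\{\E_\lambda[(\rd P/\rd\lambda)(\rd Q/\rd\lambda)\,|\,\varphi]>0\}$ we get $\rd\varphi_\sharp P/\rd\varphi_\sharp Q=\E_Q[g\,|\,\varphi]$. Two points need care.

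\begin{enumerate}[(a)]
\item \emph{Conditional expectations.} The conditional expectations in~\eqref{eq:contrib-pq>0} are written as functions on $E$ composed with $\varphi$, and they are $\lambda$-a.s.\ defined. Since $Q\ll\lambda$, the identity transfers to $Q$-a.s.
\item \emph{The complement of $G$.} We must handle $G^c$ under $Q$. Write $\rd P/\rd\lambda=2g/(1+g)$ and $\rd Q/\rd\lambda=2/(1+g)$; their product is $4g/(1+g)^2$. Take $B=\{\E_Q[g\,|\,\varphi]=0\}\in\sigma(\varphi)$. Then $P(B)=\E_Q[g\ind_B]=\E_Q[\E_Q[g|\varphi]\ind_B]=0$, so $\varphi_\sharp P$ vanishes on the image event. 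Hence $\rd\varphi_\sharp P/\rd\varphi_\sharp Q=0=\E_Q[g\,|\,\varphi]$ there, $Q$-a.s.
\end{enumerate}

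This comparison of $\E_\lambda$ and $\E_Q$ conditioned on $\sigma(\varphi)$ is the step I expect to be the main obstacle. A cleaner route, which I would likely take instead, verifies the density formula directly. For $A\in\ce$,
\[
\varphi_\sharp P(A)=\E_Q[g\ind_{\varphi\in A}]=\E_Q\big[\E_Q[g|\varphi]\ind_{\varphi\in A}\big]=\int_A g'\,\rd\varphi_\sharp Q,
\]
where $\E_Q[g|\varphi]=g'\circ\varphi$ and the last step uses~\eqref{eq:chgt-var}. This shows $\varphi_\sharp P\ll\varphi_\sharp Q$ with density $g'$, and uniqueness of the Radon–Nikodym derivative gives the identity $Q$-a.s.

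For the equality case, assume $f$ is strictly convex on $\R_+^*$ and $D_f(P\Vert Q)<\infty$. Theorem~\ref{thm:increase-Df} gives equality if and only if $g=h\circ\varphi$ $\lambda$-a.s., which is equivalent to $Q$-a.s.\ here because $P\ll Q$ makes $\lambda$ and $Q$ equivalent. If $g=h\circ\varphi$, then $g$ is $\sigma(\varphi)$-measurable, so $\E_Q[g|\varphi]=g$. Conversely, if $\E_Q[g|\varphi]=g$ $Q$-a.s., then $g=g'\circ\varphi$ is a measurable function of $\varphi$. This yields the stated characterisation.
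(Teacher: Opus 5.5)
Your proposal is correct. For the inequality and the equality case you follow the paper, which gives no separate proof of Corollary~\ref{cor:increase-Df}. The paper simply specializes Theorem~\ref{thm:increase-Df}, remarking that \eqref{eq:contrib-pq>0} suffices to identify $\rd\varphi_\sharp P/\rd\varphi_\sharp Q$ when $P\ll Q$. Your observation that $\lambda$ and $Q$ are equivalent under $P\ll Q$ is exactly what turns the theorem's $\lambda$-a.s.\ condition into the stated $Q$-a.s.\ one.

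Where you differ is the density identity. Your preferred direct check (tower property, \eqref{eq:chgt-var}, uniqueness of the Radon--Nikodym derivative) does not go through \eqref{eq:contrib-pq>0} at all. It is shorter and self-contained, and it avoids two awkward points of the specialization route:
\begin{itemize}
\item \eqref{eq:contrib-pq>0} only holds on $G=\{\E_\lambda[pq\,|\,\varphi]>0\}$, with $p=\rd P/\rd\lambda$ and $q=\rd Q/\rd\lambda$.
\item It mixes a $Q$-a.s.\ and a $P$-a.s.\ defined conditional object. Your statement that the denominator equals $1$ holds only $P$-a.s., so it would have to be transferred to $\lambda$-a.s.\ on $G$.
\end{itemize}
Your direct route is in fact the same computation the paper carries out inside the proof of the theorem, where it identifies $g_+$ with $\E_Q[\rd P/\rd Q\,|\,\varphi]$. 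The paper's framing only adds that the corollary is visibly the ``denominator $=1$'' case of the general formula.

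If you kept the first route, step (b) as written never shows that $G^c\subset B$ $Q$-a.s. This does follow: $\E_\lambda[pq\,\ind_{G^c}]=0$ forces $pq=4g/(1+g)^2=0$, hence $g=0$, $\lambda$-a.s.\ on $G^c$, so $\E_Q[g\,\ind_{G^c}]=0$.
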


\begin{rem}[On related work]
   \label{rem:related-work}
  \begin{enumerate}[(i)]
  \item \label{item:div-kernel} Let $  (U,V)$ and  $ (U',V')$  be random  variables defined  on
    $(\Omega,   \cf)$.   Assume   that  the   conditional  distributions
    $P_{V|U}$         and         $P_{V'|U'}$         exists         and
    \begin{equation}
        \label{eq:kernel-eqal}
        P_{V|U=u}(\rd     v)=P_{V'|U'=u}(\rd      v)
        \quad (P_U+P_{U'})(\rd u)-\text{a.e.}.
    \end{equation}
         By     choosing
    $\varphi(u,v)  = u$,  we deduce  from Proposition~\ref{prop:inv-gen}
    the result in     \citet[Proposition~7.2]{polyanskiy2025information}:
    $$
    D_f(P_{(U,V)}\Vert  P_{(U',V')}) =  D_f(P_U \Vert  P_{U'}).
    $$
  \item     \label{it:phi-inj}      Let
   $(\Omega,  \cf)$ and $(E, \ce)$  be Polish spaces
    and $\varphi:\Omega \mapsto E$  be an injective  function. According
    to \cite{purves1966bimeasurable}, the function $\varphi$ is
    bi-measurable.      We      obtain     $\sigma(\varphi)=\cf$      and
    \citet[Corollary~2.18]{polyanskiy2025information} is then  a direct consequence
    of      Proposition~\ref{prop:inv-gen}   (with  $f=f_\mathrm{KL}$).
  \end{enumerate}
\end{rem}

\begin{rem}[Examples of invariance of the $f$-divergence]
  \label{rem:inv-gen}
  $ $
  \begin{enumerate}[(i)]
  \item
    \label{item:inv-gen-sf=f}
    Let  $X,Y$ be two
     random variables on $(\Omega, \cf)$ and  $\varphi$  a measurable
     function  defined on $\Omega$ and taking values in a measurable
     space $(E, \ce)$.
If  $\sigma(\varphi)=\cf$, then we deduce that for $f\in \F$:
  \begin{equation}
   \label{eq:DfPf=bij}
          D_f( P_{\varphi(X)} \, \Vert \,  P_{\varphi(Y)}) = D_f(P_X \Vert P_Y ).
     \end{equation}
     Notice  that  if  $\varphi$  is a  bi-measurable  injection  (those
     conditions   are  satisfied   when  $\varphi$   is  bijective   and
     $(\Omega,        \cf)$  and $(E, \ce)$ are       Polish        spaces,
     see~\cite{purves1966bimeasurable}),       then        we       have
     $\sigma(\varphi)=\cf$, and thus~\eqref{eq:DfPf=bij} holds.

      \item Let $X,Y$ be two  real-valued
     random variables. We deduce from Theorem~\ref{thm:increase-Df}
     that for all $f\in \F$:
     \[
          D_f( P_{|X|} \, \Vert \,  P_{|Y|}) \leq  D_f(P_X \Vert P_Y ).
        \]
        If furthermore $X$ and $Y$  are symmetric (a random variable $Z$
        is  symmetric  if  $Z$  and   $-Z$  have  the  same  probability
        distribution), then
        $\rd P_X/ \rd P_Y$  is $\sigma(|\cdot|)$-measurable, and we
        deduce from Proposition~\ref{prop:inv-gen}  that
        for all $f\in \F$:
 \[
         D_f( P_{|X|} \, \Vert \,  P_{|Y|})=D_f(P_X \Vert P_Y ).
      \]
      In this  example the absolute  value function $|\cdot|$ is  not an
      injection  and  the  distributions  $P_X$ and  $P_{|X|}$  are  not
      mutually absolutely continuous.

    \item Let $X$ be a positive random variable,  $Y=-X$. We have $P_X
      \perp P_Y$ and $P_{|X|}=P_{|Y|}$ so that
  for all $f\in \F$ which is not affine whe have the strict inequality:
\[
 0=        D_f( P_{|X|} \, \Vert \,  P_{|Y|})<D_f(P_X \Vert P_Y )=(f+f^*)(0).
      \]
      In           this           example,           the           event
      $  \left\{\E_\lambda\left[\frac{\rd P_X}{\rd  \lambda}\, \frac{\rd
            P_Y}{\rd       \lambda}\right]>0       \right\}$,       with
      $\lambda=(P_X+  P_Y)/2$, which  appears in~\eqref{eq:contrib-pq>0}
      is  of  zero  $\lambda$-measure;  notice  also  that the
      denominator       in       this       formula       is       zero:
      $P_X\left(\frac{\rd     P_X}{\rd    P_Y}<+\infty     \,    \big     |\,
        \varphi\right)=0$.

  \end{enumerate}
\end{rem}

We now consider the case studied in~\citet[Theorem~7.4]{polyanskiy2025information}
related to the Markov property; notice however that we do not assume
that the conditional distributions $P_{Y| X=x}(\rd y)$ and $P_{Y'| X'=x}(\rd y)$ exist nor that they are equal.

\begin{prop}[Markov property and $f$-divergence]
\label{prop:markov}
Let two random vectors $(X,Y,Z)$ and $(X',Y',Z')$ take values in
the same product measurable space such that $X$ and $Z$
(resp.\  $X'$ and  $Z'$) are  independent conditionally  on $Y$  (resp.\
$Y'$) and  the conditional probability distributions  $P_{Z|Y=y}(\rd z)$
and      $P_{Z'|Y'=y}(\rd     z')$      exist     and      are     equal
$(P_Y+P_{Y'})(\rd y)$-a.e.. Then, we have:
\begin{equation}
   \label{eq:densite-Markov}
  \frac{\rd P_{(X', Y', Z')}}{\rd P_{(X,Y,Z)}}(x,y,z)=
      \frac{\rd P_{(X', Y')}}{\rd P_{(X,Y)}}(x,y),
          \,
\end{equation}
$ \left(P_{(X', Y', Z')}+ P_{(X,Y,Z)}\right)(\rd x, \rd y,
              \rd z)\text{-a.e.}$. In particular, for all $f\in \F$:
$$
D_f\left(P_{(X', Y', Z')} \,\Vert \,  P_{(X,Y,Z)}\right)=
  D_f\left(P_{(X', Y')} \, \Vert \,  P_{(X,Y)}\right)
  $$ as well as:
\begin{equation}
   \label{eq:ineg-Df-Markov}
D_f\left(P_{(X', Z')} \,\Vert \,  P_{(X,Z)}\right)\leq
  D_f\left(P_{(X', Y')} \, \Vert \,  P_{(X,Y)}\right).
\end{equation}
\end{prop}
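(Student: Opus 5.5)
We prove the density identity~\eqref{eq:densite-Markov} first; the two divergence statements then follow from results already stated. Write $K(y,\rd z)$ for the common conditional distribution $P_{Z|Y=y}=P_{Z'|Y'=y}$, defined $(P_Y+P_{Y'})$-a.e. We use conditional independence of $X$ and $Z$ given $Y$ together with the existence of $P_{Z|Y}$. For measurable $A,B,C$ this gives $P(X\in A, Y\in B, Z\in C)=\E[\ind_A(X)\ind_B(Y) K(Y,C)]$. Indeed, $\P(Z\in C\mid X,Y)=\P(Z\in C\mid Y)=K(Y,C)$ a.s. By a monotone class argument we then get
\[
P_{(X,Y,Z)}(\rd x,\rd y,\rd z)=P_{(X,Y)}(\rd x,\rd y)\,K(y,\rd z),
\]
and similarly $P_{(X',Y',Z')}=P_{(X',Y')}\otimes K$ with the same kernel. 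The only point needing care here is that $K$ is the same kernel on a set of full $(P_Y+P_{Y'})$-measure, hence of full measure for both $(X,Y)$-marginals. We do not need a conditional law of $Y$ given $X$.

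Next, set $\lambda_2=P_{(X,Y)}+P_{(X',Y')}$ and write $p,q$ for the densities of $P_{(X',Y')}$ and $P_{(X,Y)}$ with respect to $\lambda_2$. Then $\lambda_3=\lambda_2\otimes K$ dominates both three-dimensional laws. Their densities with respect to $\lambda_3$ are $(x,y,z)\mapsto p(x,y)$ and $q(x,y)$. By the Lebesgue decomposition characterization~\eqref{eq:Leb-decomp}, the ratio $h=p/q$ (with the paper's conventions) is the same function of $(x,y)$ in both settings. More precisely, $h\circ\pi$ with $\pi(x,y,z)=(x,y)$ satisfies~\eqref{eq:Leb-decomp} for the three-dimensional laws. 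By uniqueness $(P_{(X',Y',Z')}+P_{(X,Y,Z)})$-a.e., this gives~\eqref{eq:densite-Markov}. I expect this verification to be the main obstacle. One must check that the singular parts $\{h=+\infty\}$ and $\{h=0\}$ transfer correctly and that $\lambda_3$ is $\sigma$-finite; it is finite because $K$ is Markov.

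The equality of divergences now follows. The density ratio is $\sigma(\pi)$-measurable, and $\pi_\sharp P_{(X,Y,Z)}=P_{(X,Y)}$, $\pi_\sharp P_{(X',Y',Z')}=P_{(X',Y')}$. So Proposition~\ref{prop:inv-gen} applies and yields $D_f(P_{(X',Y',Z')}\Vert P_{(X,Y,Z)})=D_f(P_{(X',Y')}\Vert P_{(X,Y)})$. Alternatively, one can integrate $f(h)$ directly against $\lambda_3$ and use $K(y,\cdot)$ being a probability.

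Finally, for~\eqref{eq:ineg-Df-Markov} apply Theorem~\ref{thm:increase-Df} with $\varphi(x,y,z)=(x,z)$. This gives
\[
D_f(P_{(X',Z')}\Vert P_{(X,Z)})\le D_f(P_{(X',Y',Z')}\Vert P_{(X,Y,Z)}),
\]
and the right-hand side equals $D_f(P_{(X',Y')}\Vert P_{(X,Y)})$ by the previous step.
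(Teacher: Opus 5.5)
Your proposal is correct and follows the paper's proof. You disintegrate both laws with the common kernel $K$ as $P_{(X,Y)}\otimes K$ and $P_{(X',Y')}\otimes K$, deduce~\eqref{eq:densite-Markov}, and then apply Proposition~\ref{prop:inv-gen} with the projection onto $(x,y)$ and Theorem~\ref{thm:increase-Df} with $\varphi(x,y,z)=(x,z)$. Your explicit dominating measure $\lambda_2\otimes K$ supplies the detail the paper leaves implicit; note that it is exactly $P_{(X,Y,Z)}+P_{(X',Y',Z')}$, so the ratio identity already follows from the definition of $\rd P/\rd Q$, without appealing to uniqueness.
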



\section{The Csisz\'ar index}
\label{sec:csiszar_index}

We discuss in this section the Csisz\'ar index defined as the $f$-divergence between the product and the joint distribution of two random variables. First, we give the consequences of our previous results on $f$-divergences and state invariance and contraction properties of the Csisz\'ar index. Next, we introduce a family of interpolating copulas for distributions which may have atoms. Finally, we show that these interpolating copulas minimise the $f$-divergence between the product of marginal copulas and the joint copula of the initial vector.

\subsection{Introduction}
Recall that for a random variable $Z$, $P_Z$ denote its  probability distribution. 
Let  $(X,Y)$ be a couple of random variables, with $X$ (resp. $Y$) taking values in a measurable space $(\Omega_X, \cf_X)$ (resp. $(\Omega_Y, \cf_Y)$).
We  denote  by $P_X \otimes P_Y$ the product distribution (which is the probability distribution of $(X',Y')$ with $X'$ and $Y'$ independent and distributed as $X$ and $Y$). For a function $f \in \F$, we define the Csisz\'ar index between $X$ and $Y$ by:
\begin{equation}
\label{eq:def_S_f}
    S_f(X,Y) = D_f(P_X \otimes P_Y \Vert P_{(X,Y)}).
\end{equation}
This  quantity   compares  the  joint  distribution and the  product
distribution, and  is a way  to measure  the dependence between  $X$ and
$Y$.  As  with  $f$-divergence,  this quantity  can  be  infinite.  This
Csisz\'ar index has the following  properties inherited from the ones of
the $f$-divergence given in Lemma~\ref{lem:f_div_properties}.
\begin{cor}[Csisz\'ar index properties]
\label{cor:Csiszar_index_properties}
    Let $f \in \F$ (and thus $f(1)=0$), and $(X,Y)$ a couple of random variables.
    \begin{enumerate}[(i)]
    \item \label{enum:S_index_prop_symmetry} Symmetry: $S_f(X,Y) = S_f(Y,X)$.
    \item \label{enum:S_index_prop_nonnegativity} Nonnegativity: $S_f(X,Y) \geq 0$. If $X \indep Y$ (that is, $X$ and $Y$ are independent), then $S_f(X,Y) = 0$.
    \item \label{enum:S_index_prop_independent} Independence: If $f$ is strictly convex at $1$, then $S_f(X,Y) = 0 \implies X \indep Y$.
\end{enumerate}
\end{cor}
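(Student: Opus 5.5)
Items \ref{enum:S_index_prop_nonnegativity} and \ref{enum:S_index_prop_independent} come directly from Lemma~\ref{lem:f_div_properties}, and item \ref{enum:S_index_prop_symmetry} from the invariance result of Remark~\ref{rem:inv-gen}\ref{item:inv-gen-sf=f}, applied to the swap map.

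\emph{Nonnegativity.} By Lemma~\ref{lem:f_div_properties}\ref{enum:f_div_prop_nonnegativity}, with $P=P_X\otimes P_Y$ and $Q=P_{(X,Y)}$, we get $S_f(X,Y)\geq 0$. If $X\indep Y$, then $P_{(X,Y)}=P_X\otimes P_Y$, so $S_f(X,Y)=D_f(P\Vert P)=0$ by the same item.

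\emph{Independence.} If $f$ is strictly convex at $1$ and $S_f(X,Y)=0$, then Lemma~\ref{lem:f_div_properties}\ref{enum:f_div_prop_reflexivity} gives $P_X\otimes P_Y=P_{(X,Y)}$. This is exactly the statement that $X$ and $Y$ are independent.

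\emph{Symmetry.} Let $\tau:\Omega_X\times\Omega_Y\to\Omega_Y\times\Omega_X$ be the swap $\tau(x,y)=(y,x)$, equipped with the product $\sigma$-fields. I will carry out three checks.
\begin{enumerate}[(a)]
\item $\tau$ is measurable. Its inverse is again a swap, hence also measurable, so $\tau$ is a bi-measurable bijection and $\sigma(\tau)=\cf_X\otimes\cf_Y$. Measurability on rectangles suffices: $\tau^{-1}(B\times A)=A\times B$.
\item The push-forwards are the expected laws: $\tau_\sharp P_{(X,Y)}=P_{(Y,X)}$ and $\tau_\sharp(P_X\otimes P_Y)=P_Y\otimes P_X$. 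It suffices to check equality on rectangles, since rectangles form a $\pi$-system generating the product $\sigma$-field.
\item Because $\sigma(\tau)$ is the full $\sigma$-field, the density $\rd (P_X\otimes P_Y)/\rd P_{(X,Y)}$, taken in the sense of~\eqref{eq:Leb-dd2}, is automatically $\sigma(\tau)$-measurable.
\end{enumerate}
Proposition~\ref{prop:inv-gen}, equivalently~\eqref{eq:DfPf=bij}, then gives
\[
D_f(P_Y\otimes P_X\Vert P_{(Y,X)})=D_f(P_X\otimes P_Y\Vert P_{(X,Y)}),
\]
which is $S_f(Y,X)=S_f(X,Y)$.

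The only step requiring any care is the measurability bookkeeping for the swap map in the symmetry argument. It involves no Polish-space assumption, since $\sigma(\tau)=\cf_X\otimes\cf_Y$ follows directly from $\tau$ being a measurable bijection with measurable inverse.
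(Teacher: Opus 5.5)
Your proof is correct and follows the route the paper intends: nonnegativity and independence are exactly Lemma~\ref{lem:f_div_properties}~\ref{enum:f_div_prop_nonnegativity} and~\ref{enum:f_div_prop_reflexivity} applied to $P=P_X\otimes P_Y$, $Q=P_{(X,Y)}$. For symmetry the paper only says the properties are ``inherited'' from the lemma, and your swap-map argument via Proposition~\ref{prop:inv-gen} is the natural way to make that precise; note that it is not item~\ref{enum:f_div_prop_symmetry} of the lemma.
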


\begin{rem}{\bf (The Csisz\'ar index when $P_X \otimes P_Y \ll P_{(X,Y)}$)} 
We assume   that $P_{(X,Y)}$ is absolutely continuous w.r.t.\ a measure $\lambda=\lambda_X
        \otimes \lambda_Y$ on $(\Omega_X \times \Omega_Y, \cf_X\otimes
        \cf_Y)$, and  denote by $p_{(X,Y)}$ the corresponding
        density. We also denote by $p_X$  the corresponding
        marginal probability densities of $X$  w.r.t.\
        $\lambda_X$, and similarly for $Y$.
It is worth noticing that if furthermore 
 $P_X \otimes P_Y \ll P_{(X,Y)}$, 
 then the term  with $f^*(0)$ in~\eqref{eq:definition_df} which appears also in 
  Equation~\eqref{eq:def_S_f} has no contribution, and thus: 
 \begin{equation}
        S_f(X,Y) = \E \left[ f\left(\frac{p_X(X)\, p_Y(Y)}{p_{(X,Y)}(X,Y)} \right) \right].
      \end{equation}
\end{rem}

\begin{rem}{\bf (Csisz\'ar index in terms of conditional distributions)}
\label{rem:csiszar_conditional}
Assume that the conditional distribution $P_{Y \vert X}$ of $Y$ given $X$ exists (which holds, for instance, when $(\Omega_Y, \cf_Y)$ is a Polish space). We define the following nonnegative function $\cd_f(x) = D_f(P_Y \Vert P_{Y|X=x})$, on the measurable space $(\Omega_X, \cf_X)$.
For simplicity, we write by convention $
  D_f(P_Y \Vert P_{Y|X})=\cd_f(X)$. According to~\citet[Proposition~7.2]{polyanskiy2025information}, the function $D_f(P_Y \Vert P_{Y|X})$  is a nonnegative random variable defined on $\Omega_X$ and the Csisz\'ar index admits the following representation in terms of the conditional distribution:
\begin{equation}
  \label{eq:Df-kernel}
  S_f(X,Y) = \E_X \left[ D_f(P_Y \Vert P_{Y|X}) \right],
\end{equation}
where we write $\E_X$ to stress that in the expectation, the randomness comes form the random variable $X$. See also~\cite{rahman2016f} for further results in this direction. 
\end{rem}

\begin{rem}[The Csisz\'ar index when $f = f_{\mathrm{KL}^*}$] 
\label{rem:mutual_information} 
Let $X\in \R^d$ be a real random vector whose probability distribution has density $p_X$ w.r.t.  the Lebesgue measure $\lambda$. When $\E[\log(p_X(X))]$ exists, its entropy, see~\cite{cover2006element}, which can be seen as 
the $f$-divergence between the probability distribution of $X$ and the Lebesgue measure on $\R^d$, is defined by:
    \[
        \label{eq:entropy_definition}
        H(X) = - \int_{\R^d} \log (p_X) \, p_X \, \rd \lambda .
      \]
 
      Let $(X,Y) \in \R^{d_X+d_Y}$ be  a random vector whose probability
      distribution   has  density   $p_{(X,Y)}$   w.r.t.  the   Lebesgue
      measure. Let $p_{Y|X=x}(y)$ denote  the density of the conditional
      distribution   of   $Y$   given   $X=x$   w.r.t.\   the   Lebesgue
      measure. Provided  all the terms are well defined, the  conditional entropy of
      $Y$ given $X$ is defined by:
      \[
          H(Y\vert  X) = - \int_{\R^{d_X+d_Y}}  \log (p_{Y|X=x}(y))  \, p_{(X,Y)}(x,y) \, \rd x \rd y = H(X,Y) - H(X).
      \]
  
  For  $f=f_{\mathrm{KL}^*}$  the conjugate Kullback Leibler, the
  Csisz\'ar index between $X$ and $Y$  coincides with the mutual
  information between $X$ and $Y$, that is, provided all the terms are well defined:
  \[
  S_{ f_{\mathrm{KL}^*}}(X,Y)= H(X) + H(Y)-H(X,Y)=
  H(Y) - H(Y\vert  X). 
  \]
\end{rem}

\subsection{Invariance of the Csisz\'ar index}

We present  in this  section different ways  of transforming  the random
variables $X$ and $Y$ which do not modify the Csisz\'ar index. 

The following corollary  is a direct consequence of
Theorem~\ref{thm:increase-Df}. 

\begin{cor}[Variable transformation reduces the Csisz\'ar index]
    \label{cor:increase_Sf}
  For $i=1, 2$, let  $X_i $  be a random variable taking values in a
  measurable space $\Omega_i $ and
$\varphi_i$ a measurable function defined on $\Omega_i$ and taking
values in a general measurable space. Then, for all $f \in \F$, we have:
\begin{equation}
   \label{eq:Sf-ineq}
   S_f(\varphi_1(X_1), \varphi_2(X_2)) \leq   S_f(X_1,X_2) .
\end{equation}    
 \end{cor}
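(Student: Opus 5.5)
We apply Theorem~\ref{thm:increase-Df} to the product map
\[
\Phi=\varphi_1\times\varphi_2\colon \Omega_1\times\Omega_2\to E_1\times E_2,\qquad \Phi(x_1,x_2)=(\varphi_1(x_1),\varphi_2(x_2)),
\]
where $(E_i,\ce_i)$ denotes the target space of $\varphi_i$ and $E_1\times E_2$ carries the product $\sigma$-field $\ce_1\otimes\ce_2$. We take $P=P_{X_1}\otimes P_{X_2}$ and $Q=P_{(X_1,X_2)}$, both probability measures on $(\Omega_1\times\Omega_2,\cf_1\otimes\cf_2)$. By definition~\eqref{eq:def_S_f}, $S_f(X_1,X_2)=D_f(P\Vert Q)$. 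Inequality~\eqref{eq:D-phi} then gives
\[
D_f(\Phi_\sharp P\Vert \Phi_\sharp Q)\le S_f(X_1,X_2).
\]

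First, $\Phi$ is measurable. For rectangles $A_1\times A_2$ with $A_i\in\ce_i$, we have $\Phi^{-1}(A_1\times A_2)=\varphi_1^{-1}(A_1)\times\varphi_2^{-1}(A_2)\in\cf_1\otimes\cf_2$. Rectangles generate $\ce_1\otimes\ce_2$, so $\Phi$ is measurable.

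Second, we identify the two push-forwards. The push-forward $\Phi_\sharp Q$ is the law of $(\varphi_1(X_1),\varphi_2(X_2))$. For $\Phi_\sharp P$, on rectangles
\[
\Phi_\sharp P(A_1\times A_2)=P_{X_1}(\varphi_1^{-1}(A_1))\,P_{X_2}(\varphi_2^{-1}(A_2))=P_{\varphi_1(X_1)}(A_1)\,P_{\varphi_2(X_2)}(A_2).
\]
Rectangles form a $\pi$-system generating the product $\sigma$-field, so by the monotone class (uniqueness) theorem $\Phi_\sharp P=P_{\varphi_1(X_1)}\otimes P_{\varphi_2(X_2)}$. Hence
\[
D_f(\Phi_\sharp P\Vert\Phi_\sharp Q)=S_f(\varphi_1(X_1),\varphi_2(X_2)),
\]
which is~\eqref{eq:Sf-ineq}.

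None of these steps is a real obstacle. The only point needing care is this identification of $\Phi_\sharp P$ with the product of the transformed marginals: it is exactly what lets the contraction in Theorem~\ref{thm:increase-Df} carry over to the Csisz\'ar index.
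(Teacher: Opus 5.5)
Your proof is correct and is exactly the argument the paper intends when it calls this a direct consequence of Theorem~\ref{thm:increase-Df}: apply it with the product map $(x_1,x_2)\mapsto(\varphi_1(x_1),\varphi_2(x_2))$, $P=P_{X_1}\otimes P_{X_2}$ and $Q=P_{(X_1,X_2)}$. Your checks that this map is measurable and that $\Phi_\sharp(P_{X_1}\otimes P_{X_2})=P_{\varphi_1(X_1)}\otimes P_{\varphi_2(X_2)}$ supply the details the paper leaves implicit.
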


 We are now interested on conditions on the transformation for which
 the inequality~\eqref{eq:Sf-ineq} is an equality. 
The next result is a generalization of
\citet[Proposition~3.6]{rahman2016f}, were stronger regularity conditions where
assumed on the transformation.

\begin{prop}[Invariance of the Csisz\'ar index]
\label{prop:invariance_csiszar_index}
Let $f \in \F$ (and thus $f(1)=0$). For $i=1,2$, let $X_i$ be a random variable taking values in  a measurable
space $(\Omega_i, \cf_i)$ and $\varphi_i$ a measurable function defined
on $(\Omega_i, \cf_i)$ and taking values in a  mea\-su\-rable
space $(E      _i, \ce_i)$. 
    \begin{enumerate}[(i)]
    \item \textbf{Marginal invariance.} \label{enum:marginal_invariance}
   If $\rd P_{X_1} \otimes \, \rd P_{X_2} / \rd P _{(X_1,X_2)}$
    is $\sigma(\varphi_1) \otimes \sigma(\varphi_2)$-mea\-su\-rable, then
    we have:
    \[
      S_f(X_1, X_2)=S_f(\varphi_1(X_1), \varphi_2 (X_2)). 
    \]

  \item\textbf{Invariance by injection.}
  \label{enum:bijection_invariance}
 If,  for $i=1, 2$ the function  $\varphi_i$ is injective and bi-measurable, then
    we have:
    \[
      S_f(X_1, X_2)=S_f(\varphi_1(X_1), \varphi_2 (X_2)). 
    \]

 \item\textbf{Invariance for independent Markovian kernels} 
\label{it:Sf-Markov}
Assume, that  for $i=1,2$, the random  variable $X_i$ can be  written as
$X_i=(Y_i, W_i)$ and  takes values in a Polish space  (thus  the
following         probability         kernels        exist).          If
$$
P_{(W_1, W_2)|(Y_1,  Y_2)=(y_1, y_2)}(\rd w_1, \rd  w_2)= \prod_{i=1}^2
P_{W_i|Y_i=y} (\rd w_i)
$$ 
holds $P_{(Y_1, Y_2)}(\rd y_1, \rd y_2)$-a.s., then, we have:
    \[
      S_f\big((Y_1, W_1),\,  (Y_2, W_2)\big)=S_f(Y_1, Y_2). 
    \] 
  \end{enumerate}
\end{prop}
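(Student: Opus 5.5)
All three items will be reduced to Proposition~\ref{prop:inv-gen} (or Proposition~\ref{prop:markov}), applied with $\Omega=\Omega_1\times\Omega_2$, the map $\varphi=\varphi_1\otimes\varphi_2\colon(x_1,x_2)\mapsto(\varphi_1(x_1),\varphi_2(x_2))$, $P=P_{X_1}\otimes P_{X_2}$ and $Q=P_{(X_1,X_2)}$. The key preliminary remark is that push-forward commutes with the product structure. Indeed, $\varphi_\sharp(P_{X_1}\otimes P_{X_2})=P_{\varphi_1(X_1)}\otimes P_{\varphi_2(X_2)}$, as is checked on rectangles and extended by the $\pi$-$\lambda$ theorem. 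Also $\varphi_\sharp P_{(X_1,X_2)}=P_{(\varphi_1(X_1),\varphi_2(X_2))}$. Hence
\[
D_f(\varphi_\sharp P\Vert\varphi_\sharp Q)=S_f(\varphi_1(X_1),\varphi_2(X_2)).
\]

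For \ref{enum:marginal_invariance}, I note that $\sigma(\varphi)=\sigma(\varphi_1)\otimes\sigma(\varphi_2)$. One inclusion holds because the rectangles $\varphi_1^{-1}(A_1)\times\varphi_2^{-1}(A_2)=\varphi^{-1}(A_1\times A_2)$ generate $\sigma(\varphi_1)\otimes\sigma(\varphi_2)$. The other holds because $\{B\in\ce_1\otimes\ce_2 : \varphi^{-1}(B)\in\sigma(\varphi_1)\otimes\sigma(\varphi_2)\}$ is a $\sigma$-field containing the rectangles. The hypothesis then says exactly that $\rd P/\rd Q$ is $\sigma(\varphi)$-measurable, and Proposition~\ref{prop:inv-gen} gives the equality.

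For \ref{enum:bijection_invariance}, injectivity and bi-measurability give $\sigma(\varphi_i)=\cf_i$, since for $A\in\cf_i$ we have $A=\varphi_i^{-1}(\varphi_i(A))$ with $\varphi_i(A)\in\ce_i$. Therefore $\sigma(\varphi_1)\otimes\sigma(\varphi_2)=\cf_1\otimes\cf_2$. Every measurable density is then measurable for this $\sigma$-field, and item \ref{enum:marginal_invariance} applies; this is also Remark~\ref{rem:inv-gen}~\ref{item:inv-gen-sf=f}.

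For \ref{it:Sf-Markov}, I will use Proposition~\ref{prop:markov} (equivalently Remark~\ref{rem:related-work}~\ref{item:div-kernel}) with $U=(Y_1,Y_2)$, $V=(W_1,W_2)$, and $(U',V')$ distributed as $P_{(Y_1,W_1)}\otimes P_{(Y_2,W_2)}$. Under $P'=P_{X_1}\otimes P_{X_2}$, the law of $U'$ is $P_{Y_1}\otimes P_{Y_2}$. Its conditional law of $(W_1,W_2)$ given $(Y_1,Y_2)=(y_1,y_2)$ is $\prod_i P_{W_i|Y_i=y_i}$, which I will verify by disintegrating each factor $P_{(Y_i,W_i)}(\rd y_i,\rd w_i)=P_{Y_i}(\rd y_i)P_{W_i|Y_i=y_i}(\rd w_i)$. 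By hypothesis this equals the kernel of the joint law $P_{V|U}$, $P_U$-a.s. The main obstacle is that the paper needs the kernels to agree $(P_U+P_{U'})$-a.e., whereas the hypothesis only gives $P_U$-a.s. However, one may choose the version of $P_{V|U}$ on a $P_U$-null set freely. I will therefore define the kernel of $V$ given $U$ under the joint law to be $\prod_i P_{W_i|Y_i=y_i}$ everywhere, which is legitimate since it agrees with $P_{V|U}$ $P_U$-a.s. With this version \eqref{eq:kernel-eqal} holds everywhere, hence $(P_U+P_{U'})$-a.e. Remark~\ref{rem:related-work}~\ref{item:div-kernel} (or Proposition~\ref{prop:inv-gen} with $\varphi(u,v)=u$, after checking that the density depends only on $u$) then yields
\[
D_f(P'\Vert P_{(X_1,X_2)})=D_f(P_{Y_1}\otimes P_{Y_2}\Vert P_{(Y_1,Y_2)}),
\]
which is the claimed identity $S_f((Y_1,W_1),(Y_2,W_2))=S_f(Y_1,Y_2)$.
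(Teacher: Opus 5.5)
Your proof is correct and follows the paper's route. Item~(i) goes through Proposition~\ref{prop:inv-gen} with the product map, item~(ii) through $\sigma(\varphi_i)=\cf_i$ (the paper cites Remark~\ref{rem:inv-gen} directly), and item~(iii) through Remark~\ref{rem:related-work}~\ref{item:div-kernel} with the same $U,V,U',V'$; your choice of kernel versions, which upgrades the $P_{(Y_1,Y_2)}$-a.s.\ hypothesis to the $(P_U+P_{U'})$-a.e.\ agreement that remark requires, is a detail the paper leaves implicit and you handle it correctly.
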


\begin{proof}[Proof of Proposition~\ref{prop:invariance_csiszar_index}]
 Point{\it ~\ref{enum:marginal_invariance}} is a consequence of Proposition~\ref{prop:inv-gen} with $\varphi(x_1, x_2)=(\varphi_1(x_1),\varphi_2(x_2))$. Point{\it ~\ref{enum:bijection_invariance}} is a consequence of Remark~\ref{rem:inv-gen}~\ref{item:inv-gen-sf=f}. 
 Point{\it ~\ref{it:Sf-Markov}} is a direct consequence of
 Remark~\ref{rem:related-work}~\ref{item:div-kernel} with 
 $U=(Y_1, Y_2)$, $V=(W_1, W_2)$, $U'=(Y'_1, Y'_2)$, $V'=(W'_1, W'_2)$
 and $(Y'_1, W'_1)$ and $(Y'_2, W'_2)$ independent and distributed as $(Y_1, W_1)$ and $(Y_2, W_2)$. 
\end{proof}

\begin{rem}[Example of invariance of the  Csisz\'ar index]
  \label{rem:inv-gen-index}
  Let $f\in \F$
  \begin{enumerate}[(i)]
  \item \textbf{Marginal invariance.}
Suppose that $X$ and $Y$ are real valued random variable and that
$(X,Y)$  is
symmetric, that is,  $(-X, -Y)$ and $(X,Y)$ have the same
probability distribution. Then by Proposition~\ref{prop:invariance_csiszar_index}{\it ~\ref{enum:marginal_invariance}}, 
we have 
$$
S_f(|X|,|Y|)=S_f(X,Y).
$$

\item \textbf{Invariance by injection.}
Suppose that $X$ and $Y$ are real valued random variables and that the
cumulative distribution  functions $F_X$ and $F_Y$ are
increasing. By Proposition~\ref{prop:invariance_csiszar_index}{\it ~\ref{enum:bijection_invariance}}, this implies that:
\begin{equation}
   \label{eq:Sf-cop-density}
  S_f(F_X(X), F_Y(Y))=S_f(X,Y).
\end{equation}
If furthermore $F_X$ and $F_Y$ are bijections, then the random variables  $F_X(X)$ and $F_Y(Y)$ are uniformly distributed over $[0,
1]$. We shall give more details in this direction in the next section. 

\item \textbf{``Forgetting'' independent random variables.}
\label{it:removing_independent_variables}
Let $X$, $Y$ and $U$ be random variables with $U$ independent of $Y$ and such that $U$ has a conditional distribution given $X$ (which is satisfied if $U$ takes values in a Polish space).  By Proposition~\ref{prop:invariance_csiszar_index}{\it ~\ref{it:Sf-Markov}}, we deduce that:
    \[
    S_f((X,U), Y)=S_f(X, Y). 
    \]
  \end{enumerate}
\end{rem}

The     next      result     is     a     direct      consequence     of
Proposition~\ref{prop:markov},                  see                 also
\citet[Theorem~7.16]{polyanskiy2025information}.  We say  that the random
variable $(X,Y,Z)$  is a Markov  chain if $X  $ and $Z$  are independent
conditionally on $Y$.

\begin{prop}{\bf (Monotonicity of the Csisz\'ar index along a Markov chain)}
    \label{prop:monoticity_csiszar_index_markov}
    Let  $(X,Y,Z)$ be  a random  variable  such that  $X $  and $Z$  are
    independent  conditionally on  $Y$ and  the conditional  probability
    distributions $P_{Z|Y}$ exists.  For any $f \in \F$, we have:
    \[
    S_f(X,Z) \leq S_f(X,Y) .
    \]
\end{prop}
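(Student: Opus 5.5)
We deduce this from Proposition~\ref{prop:markov}, applied to two suitably chosen triples. The first triple is $(X,Y,Z)$ itself. For the second, we take a triple $(X',Y',Z')$ defined on a possibly enlarged probability space, built as follows:
\begin{itemize}
\item $X'$ is distributed as $X$;
\item $(Y',Z')$ is distributed as $(Y,Z)$;
\item $X'$ and $(Y',Z')$ are independent.
\end{itemize}
Such a triple exists by taking the product of $P_X$ and $P_{(Y,Z)}$ on the product space.

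By construction, $P_{(X',Y')}=P_X\otimes P_Y$ and $P_{(X',Z')}=P_X\otimes P_Z$. Hence
\[
D_f\left(P_{(X',Y')}\,\Vert\,P_{(X,Y)}\right)=S_f(X,Y)
\quad\text{and}\quad
D_f\left(P_{(X',Z')}\,\Vert\,P_{(X,Z)}\right)=S_f(X,Z).
\]
So inequality~\eqref{eq:ineg-Df-Markov} is exactly the claim, provided the hypotheses of Proposition~\ref{prop:markov} hold. Note that the roles match: in Proposition~\ref{prop:markov} the primed triple is in the first argument of $D_f$, and here the product law is in the first argument of $S_f$.

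It remains to check these hypotheses.
\begin{itemize}
\item $X$ and $Z$ are independent conditionally on $Y$: this is the assumption.
\item $X'$ and $Z'$ are independent conditionally on $Y'$: since $X'$ is independent of the pair $(Y',Z')$, we have, for bounded measurable $g,h$,
\[
\E[g(X')h(Z')\,|\,Y']=\E[g(X')]\,\E[h(Z')\,|\,Y']=\E[g(X')\,|\,Y']\,\E[h(Z')\,|\,Y'].
\]
\item The kernel $P_{Z|Y}$ exists by assumption. Because $(Y',Z')$ has the same law as $(Y,Z)$, the same kernel serves as a version of $P_{Z'|Y'}$, via the disintegration~\eqref{eq:proba-decomp}.
\item The two kernels are equal $P_Y$-a.e., and hence $(P_Y+P_{Y'})$-a.e., since $P_{Y'}=P_Y$.
\end{itemize}
Proposition~\ref{prop:markov} then gives $S_f(X,Z)\le S_f(X,Y)$.

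The only delicate point is formal: the primed triple may live on a different probability space from $(X,Y,Z)$. This causes no difficulty, because Proposition~\ref{prop:markov} involves only the laws of the two triples on a common product measurable space. No additional integrability assumption is needed, since Proposition~\ref{prop:markov} holds for every $f\in\F$ with values in $[0,+\infty]$.
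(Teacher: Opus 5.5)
Your proof is correct and is the deduction the paper intends: the paper simply states that this result is a direct consequence of Proposition~\ref{prop:markov}. Your primed triple, with $X'\sim X$, $(Y',Z')\sim(Y,Z)$ and $X'$ independent of $(Y',Z')$, is exactly what turns \eqref{eq:ineg-Df-Markov} into $S_f(X,Z)\le S_f(X,Y)$, and your verification of the hypotheses (conditional independence, common kernel) is complete.
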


\subsection{Csiszár index and copulas}
\label{sec:copula}

In this section we prove the Csiszár index  for real-valued random variables can be
written using (some of their) copulas.

For an $\R^d$-valued  random variable $Z=(Z_1, \ldots,  Z_d)$, we denote
its  support (or  the support  of its  probability distribution)  as the
smallest  closed set  $F$ of  $\R^d$ such  that $\P(Z\in  F)=1$ and  its
discrete  support or set of atoms as  $\Delta(Z)=\{z\in  \R^d  \, \colon\,  \P(Z=z)>0\}$
which is an at most countable set.
The cumulative distribution function (cdf) 
of          $Z$,          $F_Z$         is          defined          as:
$F_Z(z)=\P(Z_1\leq    z_1,   \ldots,    Z_d   \leq    z_d)$   for    all
$z=(z_1, \ldots, z_d)\in \R^d$.
When all the one-dimensional marginals
of  $Z$  are uniform  on  $[0,1]$,  then  the  cdf $F_Z$  restricted  to
$[0,1]^d$, which  contains the  support of the  distribution of  $Z$, is
called  a $d$-dimensional  copula, or  simply  copula when  there is  no
ambiguity  on the  dimension  $d$. 

For  an  $\R^d$-valued  random
variable $Z=(Z_1,  \ldots, Z_d)$, we  say that a  $d$-dimensional copula
$C$ is a copula for $Z$ if for all $z_1, \dots ,z_d \in \R$:
\begin{equation}
\label{eq:definition_copula_vector}
    F_Z(z_1, \dots, z_d) = C(F_{Z_1}(z_1) , \dots, F_{Z_d }(z_d)).
\end{equation}
Intuitively a copula  for $Z$ describes the  dependence structure between
its marginals. According to Sklar's theorem there always exists at least
one copula  for $Z$, and this  copula is unique  if and only if  all the
one-dimensional  cdf   $F_{Z_1}\ldots,  F_{Z_d}$  are   continuous.  However, we build in the 
Examples~\ref{exple:unique-cop-rep} and~\ref{exple:different_copula_give_different_index}
below very different copulas for the vector $Z=(X,Y)$ of two Bernoulli random variables.  


The checkerboard copula studied by \cite{lin2025checkerboard} uses a uniform randomization applied simultaneously to all atoms along each coordinate of $Z$, with independent randomizations across coordinates. We generalize this construction by allowing uniform randomizations that may vary across atoms within a given coordinate and may be dependent across different coordinates. We thus define a new family of interpolating copulas.

\begin{defi}{\bf (Interpolating  and checkerboard  copulas)} \\
  \label{defi:interpol-cop}
  Let $Z=(Z_1,  \ldots, Z_d)$  be an  $\R^d$-valued random  variable.
\begin{enumerate}[(i)]
\item \label{item:interpol-cop}  An interpolating  copula for $Z$, denoted by $\Cip$, is the
  cdf of the $[0, 1]^d$-valued random variable $U=(U_1, \ldots, U_d)$ where:
 \begin{equation}
     \label{eq:def-U1-d}
    U_i=F_{Z_i}(Z_i-) + \sum_{x\in \Delta(Z_i)} \alpha_i(x)
   \,  T_{i,x} \ind_{\{Z_i=x\}}
     \quad\text{with}\,\,
     \alpha_i(x)=\P(Z_i=x) 
\end{equation} 
and  $(T_{i,x})_{i\in  \{1, \ldots,  d\}, x\in  \Delta(Z_i)}$ being 
 $[0,1]$-uniform random variables independent of $Z$.

\item The (unique) checkerboard copula for $Z$,  denoted by $\Cb_Z$, is the 
  interpolating  copula where,  for  all $i\in  \{1,  \ldots, d\}$,  the
  random variables  $(T_{i,x})_{x\in \Delta(Z_i)}$  are all  equal, with
  common  value  say $T_i$,  and  the  $[0,1]$-uniform random  variables
  $T_1, \ldots, T_d$ are independent and independent of $Z$.   
\end{enumerate}
\end{defi}

Notice that $U_i$ in~\eqref{eq:def-U1-d} belongs to $ [F_{Z_i}(Z_i-), F_{Z_i}(Z_i)]$,  and thus if the cdf
of $Z_i$ is continuous, then  $U_i=F_{Z_i}(Z_i)$.  In particular, if all
the one-dimensional  cdf $F_{Z_1}\ldots,  F_{Z_d}$ are  continuous (that
is, $\Delta(Z_i)=\emptyset$ for  all $i\in \{1, \ldots,  d\}$), then the
interpolating copula is unique and equal to the copula of $Z$.

For the reader convenience, we provide a short proof of the next lemma  in
Section~\ref{sec:proof_introduction}. 

\begin{lem}[Interpolating copula]
  \label{lem:interpol-cop=cop}
   Let $Z$ be an  $\R^d$-valued  random variable. An interpolating
   copula of $Z$ is a copula for $Z$. 
\end{lem}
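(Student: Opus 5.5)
We have to check two things: each $U_i$ is uniform on $[0,1]$, so that the cdf $\Cip$ of $U$ is a copula; and \eqref{eq:definition_copula_vector} holds, that is, $\Cip(F_{Z_1}(z_1),\dots,F_{Z_d}(z_d)) = F_Z(z)$ for all $z\in\R^d$. The plan rests on one pathwise fact: almost surely, for every $i$ and every $z_i\in\R$,
\[
\{U_i \le F_{Z_i}(z_i)\} = \{Z_i \le z_i\}.
\]
Once this holds for all coordinates simultaneously, intersecting over $i$ and taking probabilities gives
\[
\Cip(F_{Z_1}(z_1),\dots,F_{Z_d}(z_d)) = \P(U_1\le F_{Z_1}(z_1),\dots,U_d\le F_{Z_d}(z_d)) = \P(Z_1\le z_1,\dots,Z_d\le z_d) = F_Z(z).
\]
This step uses no assumption on the joint law of the $T_{i,x}$, only that each is $[0,1]$-valued.

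\textbf{Marginal uniformity.} Fix $i$ and $u\in(0,1)$, and drop the index $i$. Let $z^*=\inf\{z: F(z)\ge u\}$, so that $F(z^*-)\le u\le F(z^*)$.
\begin{itemize}
\item On $\{Z<z^*\}$ we have $U\le F(Z)\le F(z^*-)\le u$. The second inequality holds because $Z<z^*$ and $F$ is nondecreasing.
\item On $\{Z>z^*\}$ we have $U\ge F(Z-)\ge F(z^*)\ge u$. Since $U$ has no atom at $u$ outside an atom of $Z$ at $z^*$ (we argue the case split below), this piece contributes nothing to $\P(U\le u)$ up to a null set. In the absence of an atom at $z^*$, $U=F(Z)$ is continuous there.
\item On $\{Z=z^*\}$: if $z^*$ is not an atom, this event is null. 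If $z^*$ is an atom with mass $\alpha$, then $U=F(z^*-)+\alpha T_{z^*}$ with $T_{z^*}$ uniform and independent of $Z$. This gives
\[
\P(U\le u, Z=z^*) = \alpha\cdot\frac{u-F(z^*-)}{\alpha} = u-F(z^*-).
\]
\end{itemize}
Summing, $\P(U\le u)=F(z^*-)+(u-F(z^*-))=u$. The continuous-at-$z^*$ case is the standard fact that $F(Z)$ is uniform there, and the same decomposition handles it.

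\textbf{The pathwise fact.} If $Z_i\le z_i$, then $U_i\le F(Z_i)\le F(z_i)$. If $Z_i>z_i$, then $U_i\ge F(Z_i-)\ge F(z_i)$, with equality possible only in two situations:
\begin{itemize}
\item $F(Z_i-)=F(z_i)$ and $T_{i,Z_i}=0$ at an atom, which has probability zero;
\item $Z_i$ is a non-atom with $F(Z_i)=F(z_i)$, meaning $Z_i$ lies in a flat stretch of $F$ to the right of $z_i$. This has probability zero, since $\P(z_i<Z_i\le w)=F(w)-F(z_i)=0$.
\end{itemize}

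\textbf{Main obstacle.} The hardest part is making the exceptional null sets uniform in $z_i$, rather than fixed for each $z_i$. The cleanest route is to note that the bad event
\[
\{\exists\, z_i<Z_i \text{ with } F(z_i)=F(Z_i)\} \cup \{T_{i,Z_i}=0\}
\]
is contained in a countable union of null events. For the first part, each maximal flat interval of $F$ has zero mass, and there are countably many such intervals. For the second part, $\Delta(Z_i)$ is countable. This gives the pathwise identity almost surely for all $z_i$ simultaneously, which is what the intersection step requires.
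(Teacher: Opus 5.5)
Your proof is correct and follows essentially the paper's route: the a.s.\ identity $\{U_i\le F_{Z_i}(z_i)\}=\{Z_i\le z_i\}$ gives the copula identity, and together with a direct computation at the atoms it gives the uniformity of each $U_i$. This identity is the paper's \eqref{eq:F-XY}, which the paper derives from Lemma~\ref{lem:X-FX} via the generalized inverse rather than from your flat-stretch argument.

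A few minor points:
\begin{enumerate}[(i)]
\item The null sets need not be uniform in $z_i$, since for a fixed $z$ only $d$ events are intersected, so your ``main obstacle'' is not actually an obstacle.
\item The vanishing of $\P(U\le u,\, Z>z^*)$ should be justified by your pathwise fact at $z_i=z^*$, not by $U$ having no atom at $u$, since that is part of what you are proving.
\item A maximal flat interval of $F$ can carry an atom at its left endpoint. Only the part strictly to the right of that endpoint is null, but that is all your bad event needs.
\end{enumerate}
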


\begin{exple}[On the non-uniqueness of the interpolating copula]
  \label{exple:unique-cop-rep}
    Let  $Z=(X,Y)$  be  a  couple  of Bernoulli  random  variables  and
    set $p=\P(X=1)$,  $q=\P(Y=1)  $ and  $r=\P(X=1,Y=1) $. We
    assume:
    \begin{equation}
        \label{eq:pqr}
        p,q\in (0,1) 
        \quad\text{and}\quad
        (p+q-1)_+ < r <
    \min(p,q). 
    \end{equation} 
    Set $\rho=1-p-q+r$. 
    It is easy to check that a $2$-dimensional copula $C$
    is a  copula for $(X,Y)$ if and only if $C(1-p, 1-q) = \rho$.
    The checkerboard copula $\Cb_Z$ 
is given by the cdf of  $(U,V)$
where 
$$
U= (1-p) \ind_{\{X=1\}} + T ( (1-p) \ind_{\{X=0\}} + p \ind_{\{X=1\}} ) 
$$
    and 
    $$
    V=  (1-q) \ind_{\{Y=1\}} + T' ( (1-q) \ind_{\{Y=0\}} + q
    \ind_{\{Y=1\}} ) ,
    $$ where $T$ and $T'$ are independent random variables uniform on
    $[0, 1]$ and independent of $(X,Y)$. 

    We consider the interpolating copula $\Cip_Z$  given by the cdf of $(U, V')$
    with  $V'=  (1-q) \ind_{\{Y=1\}} + (1-T) ( (1-q)
    \ind_{\{Y=0\}} + q \ind_{\{Y=1\}} ) $.

    In Fig.~\ref{fig:comparaison_checkerboard_copula_interpolating_copula}, we display the copulas $\Cb_Z$ and $\Cb_Z-\Cip_Z$ with $p =q=1/2$ and $r = 5/16$. 
 \end{exple}


    \begin{figure}[htbp]
        \centering
        \begin{subfigure}[t]{0.49\textwidth}
            \centering
            \begin{tikzpicture}[scale=0.8]
                \begin{axis}[
                    view={-10}{25}, 
                    xlabel={$u$},
                    xlabel style={font=\large},
                    ylabel={$v$},
                    ylabel style={font=\large},
                    grid=major,
                    domain=0:1,
                    y domain=0:1,
                    samples=\nsamples,
                    samples y=\nsamples,
                    enlargelimits=false,
                    xtick={0,0.5,1},
                    ytick={0,0.5,1},
                    ztick={0,0.5,1,1.5,2},
                    shader=flat,
                ]
                \addplot3[
                    surf, shader=interp
                ]
                (
                    x, y,
                    { (x <= 0.5) && (y <= 0.5) ?
                        (1.25*x*y) :
                      ( (x > 0.5) && (y <= 0.5) ?
                        (0.625*y + 0.75*(x-0.5)*y) :
                        ( (y > 0.5) && (x <= 0.5) ?
                          (0.625*x + 0.75*(y-0.5)*x) :
                          (0.3125 + 0.375*(x-0.5) + 0.375*(y-0.5) + 1.25*(x-0.5)*(y-0.5))
                        )
                      )
                    }
                );
                \end{axis}
            \end{tikzpicture}
            \caption{Plot of the checkerboard copula $\Cb_Z$.}
        \end{subfigure}
        \hfill
        \begin{subfigure}[t]{0.49\textwidth}
            \centering
        \begin{tikzpicture}[scale=0.8]
            \begin{axis}[
                view={-10}{25}, 
                xlabel={$u$},
                xlabel style={font=\large},
                ylabel={$v$},
                ylabel style={font=\large},
                grid=major,
                domain=0:1,
                y domain=0:1,
                samples=\nsamples,
                samples y=\nsamples, 
                enlargelimits=false,
                xtick={0,0.5,1},
                ytick={0,0.5,1},
                shader=flat,
                scaled z ticks=false,
                ]

                \addplot3[
                    surf , shader=interp 
                ]
                (
                    x, y,
                    { (x <= 0.5) && (y <= 0.5) ?
                        (1.25*x*y - (0.3125 * max(0, 2*x + 2*y - 1))) :
                      ( (x > 0.5) && (y <= 0.5) ?
                        (0.625*y + 0.75*(x-0.5)*y - (y*0.3125/0.5 + (0.5-0.3125)*max(0,2*(x-1) + 2*y))) :
                        ( (y > 0.5) && (x <= 0.5) ?
                          (0.625*x + 0.75*(y-0.5)*x - (x*0.3125/0.5 + (0.5-0.3125)*max(0,2*(y-1) + 2*x))) :
                          (0.3125 + 0.375*(x-0.5) + 0.375*(y-0.5) + 1.25*(x-0.5)*(y-0.5) - (0.3125 + (0.5-0.3125)*(2*(y-1) + 1) + (0.5-0.3125)*(2*(x-1) + 1) + 0.3125*max(0,2*(x-1) + 2*(y-1) +1)))
                        )
                      )
                    }
                );
                \end{axis}
        \end{tikzpicture}
            \caption{Plot of the difference  $\Cb_Z-\Cip_Z$.}
        \end{subfigure}
        \caption{Comparison of the copulas $\Cb_Z$ and $\Cip_Z$ in Example~\ref{exple:unique-cop-rep}.}
        \label{fig:comparaison_checkerboard_copula_interpolating_copula}
    \end{figure}
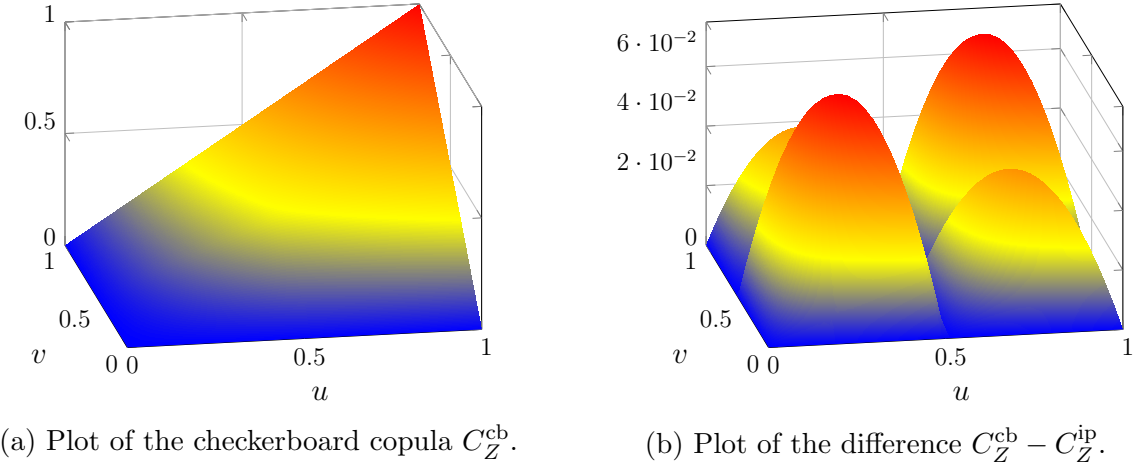

  In  what follows  the  function $(x,y)\mapsto  f(x)  g(y)$ is  denoted
  $f   \otimes    g$.    If   $C_{(X,Y)}$    is   a   copula    of   the
  $\R^{d_X+d_Y}$-valued random variable $(X,  Y)$, then the corresponding
  marginal copulas $C_X$ and $C_Y$ defined by:
   \[
    C_X(x)=C_{(X,Y)}(x,1, \ldots,1)
    \quad\text{for $x\in [0, 1]^{d_X}$}
    \]
    and
    \[
     C_Y(y)=C(1, \ldots, 1, y)
    \quad\text{for $y\in [0, 1]^{d_Y}$}
  \]
 are copulas for $X$ and for $Y$, respectively.

\begin{rem}[Marginal of an interpolating  copula]
  \label{rem:damier-2d}
 Let $X$ and $Y$ be random variables on 
 $\R^{d_X}$ and $\R^{d_Y}$.
 On the one hand,  if  $C_{(X,Y)}$ is an interpolating
  copula  of $(X, Y)$, then the
  marginal copulas $C_X$ and $C_Y$ are    interpolating  copulas  respectively for $X$ and for $Y$.   Notice  also   that  if
  $X$ and $Y$ are independent with  an interpolating copula and
  $C_X$ and $C_Y$, then the copula  $C_X\otimes C_Y$ is an
  interpolating copula for $(X,Y)$. 
  
On the other hand, the marginals $C_X$ and $C_Y$ of 
the checkerboard  copula $C_{(X,Y)}=\Cb_{(X,Y)}$
are  the 
checkerboard  copula of $X$ and of $Y$, that is,
$C_X= \Cb_X$ and $C_Y=\Cb_Y$. 
Furthermore, the random variables $X$ and $Y$ are independent if and
only if: 
\[
  \Cb_{(X,Y)}=\Cb_{X}\otimes \Cb_{Y}.
\]
\end{rem}

We show in the next theorem that the Csiszár  index between $X$
and      $Y$  is always smaller than that  between    their     associated      copulas. Example~\ref{exple:different_copula_give_different_index} shows that the inequality can be strict in the case where the marginals  have atoms. Morever, we show that the interpolating copulas attain the equality. The proof is postponed to Section~\ref{sec:proof_introduction}.  

Recall that if  $C_{(X,Y)}$ is a copula for  $(X,Y)$ then the corresponding marginals  of  $C_{(X,Y)}$,  which for simplicity will be denoted $C_X$ and
$C_Y$,   are copulas for $X$ and
for $Y$. 

\bigskip

\begin{theo}{\bf ($f$-divergence between product of marginal copulas and joint copula)}
\label{thm:interpolating_minimise_fdiv} \\
Let $X=(X_1, \ldots, X_{d_X})$
and $Y=(Y_1, \ldots, Y_{d_Y})$ be real random
vectors of respective dimension $d_X$ and $d_Y$.

\begin{enumerate}[(i)]
   \item \label{it:copu-majoD}
Let $C_{(X,Y)}$ be a copula for  $(X,Y)$. Then we have for $f \in \F$:
\begin{equation}
\label{eq:ineq-cop-Csiszar}
    S_f(X,Y) \leq D_f(C_X \otimes C_Y \Vert C_{(X,Y)}). 
\end{equation}

\item \label{it:copu-equalD} Consider an interpolating copula
  $\Cip_{(X,Y)}$ of $(X,Y)$ given by the cdf of $(U=(U_1, \ldots, U_{d_X}),V=(V_1, \ldots, V_{d_Y}))$ where:
  \begin{align}
\label{eq:def-U}
    U_i&=F_{X_i}(X_i-) + \sum_{x\in \Delta(X_i)} \P(X_i=x) 
         \,  T_{i,x} \ind_{\{X_i=x\}} \quad\text{for $i\in \{1, \ldots, d_X\}$},\\
    \label{eq:def-V}
V_j&= F_{Y_j}(Y_j-) + \sum_{y\in \Delta(Y_j)} \P(Y_j=y) 
     \,  W_{j,y} \ind_{\{Y_j=y\}} \quad\text{for $j\in \{1, \ldots, d_Y\}$},
  \end{align}
  with  $T=(T_{i,x})_{i\in \{1,  \ldots, d_X\},  x\in \Delta(X_i)}$  and
  $W=(W_{j,y})_{j\in  \{1,  \ldots,  d_Y\},  y\in  \Delta(Y_j)}$  having
  uniform one dimensional  marginals on $[0, 1]$, and such  that $T$ and
  $W$  are independent  and  independent of  $(X,Y)$.   Recall that  the
  marginal  copulas   of  $\Cip_{(X,Y)}$  are 
  interpolating copulas for $X$ and for $Y$, $\Cip_X$ and $\Cip_Y$ respectively, as well as  cdf of $U$ and of
  $V$.  Then we have for $f \in \F$:
\[
        S_f(X,Y) = D_f(\Cip_X \otimes \Cip_Y \, \Vert\,  \Cip_{(X,Y)}). 
\]

\item \label{it:copu-cb}
  In particular, considering the checkerboard copulas for $X$, $Y$ and $(X,Y)$ respectively, we have:
\begin{equation}
   \label{eq:S_f_copula}
    S_f(X,Y) = D_f(\Cb_X \otimes \Cb_Y \, \Vert\,  \Cb_{(X,Y)})
    = \min D_f(C_X \otimes C_Y \Vert C_{(X,Y)}),
  \end{equation}  
  where the minimum is taken over all the copulas $C_{(X,Y)}$ of $(X,
  Y)$. 
\end{enumerate}
\end{theo}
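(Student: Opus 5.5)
The plan is to realize every copula of $(X,Y)$ as the law of a random vector $(U,V)$ from which $(X,Y)$ is recovered by a measurable map. Part (i) then follows from the contraction Theorem~\ref{thm:increase-Df}. Part (ii) follows from the invariance of Proposition~\ref{prop:invariance_csiszar_index}~\ref{it:Sf-Markov}, applied with the randomizations $T$ and $W$ viewed as independent Markovian kernels.

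For (i), let $(U,V)$ be a $[0,1]^{d_X+d_Y}$-valued random vector with cdf $C_{(X,Y)}$. Then $U$ has cdf $C_X$ and $V$ has cdf $C_Y$, and $C_X\otimes C_Y$ is the law of $(U',V')$ with $U'\indep V'$ distributed as $U$ and $V$. Set $X_i^\star=F_{X_i}^{-1}(U_i)$ and $Y_j^\star=F_{Y_j}^{-1}(V_j)$, using the generalized (left-continuous) quantile functions. Since $F^{-1}(u)\le z\iff u\le F(z)$, we get
\[
\P(X^\star\le x, Y^\star\le y)=C_{(X,Y)}(F_{X_1}(x_1),\dots,F_{Y_{d_Y}}(y_{d_Y}))=F_{(X,Y)}(x,y)
\]
by~\eqref{eq:definition_copula_vector}. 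So $(X^\star,Y^\star)$ has the law of $(X,Y)$, and likewise the image of $(U',V')$ has law $P_X\otimes P_Y$. The map $\varphi(u,v)=\bigl((F_{X_i}^{-1}(u_i))_i,(F_{Y_j}^{-1}(v_j))_j\bigr)$ is measurable because it is monotone in each coordinate. Hence $\varphi_\sharp C_{(X,Y)}=P_{(X,Y)}$ and $\varphi_\sharp(C_X\otimes C_Y)=P_X\otimes P_Y$. Theorem~\ref{thm:increase-Df}, or equivalently Corollary~\ref{cor:increase_Sf} with $\varphi_1,\varphi_2$ the coordinatewise quantile maps, then gives~\eqref{eq:ineq-cop-Csiszar}.

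For (ii), first write $U=\psi_X(X,T)$ and $V=\psi_Y(Y,W)$, with $\psi_X,\psi_Y$ the measurable maps given by~\eqref{eq:def-U} and~\eqref{eq:def-V}. The index sets $\Delta(X_i)$ are countable, so $T$ and $W$ live in Polish spaces $[0,1]^{\N}$. By hypothesis $T\indep W$ and $(T,W)\indep(X,Y)$. Hence, conditionally on $(X,Y)=(x,y)$, the pair $(T,W)$ has the product law $P_T\otimes P_W$, which is exactly the condition of Proposition~\ref{prop:invariance_csiszar_index}~\ref{it:Sf-Markov} with $Y_1=X$, $W_1=T$, $Y_2=Y$, $W_2=W$. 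This gives $S_f((X,T),(Y,W))=S_f(X,Y)$. Corollary~\ref{cor:increase_Sf} applied to $\psi_X,\psi_Y$ gives $S_f(U,V)\le S_f((X,T),(Y,W))=S_f(X,Y)$. By definition $S_f(U,V)=D_f(\Cip_X\otimes\Cip_Y\Vert\Cip_{(X,Y)})$, and part (i) applied to this copula gives the reverse inequality. Hence equality holds.

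For (iii), the checkerboard copula is the special case $T_{i,x}=T_i$ and $W_{j,y}=W_j$ with all these variables independent, so (ii) applies. Combined with (i), the value $S_f(X,Y)$ is attained by $\Cb_{(X,Y)}$ and is a lower bound over all copulas, which gives the minimum.

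The main obstacle is the first step of (i): verifying that the quantile map pushes the copula forward onto the exact joint law, including at atoms. This rests on the Galois relation $F^{-1}(u)\le z\iff u\le F(z)$ and on copula marginals being exactly uniform. A secondary point is that the proposition in (ii) requires the variables to take values in Polish spaces; this holds for $\R^{d}\times[0,1]^{\N}$.
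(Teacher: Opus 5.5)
Your proof is correct. Parts (i) and (iii) match the paper's argument: a quantile transform plus Corollary~\ref{cor:increase_Sf}, then specialization to the checkerboard copula. Part (ii) takes a genuinely different route.

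The paper works with the truncated randomizations $\tilde T_{i,x}=T_{i,x}\ind_{\{X_i=x\}}$. It argues that $U$ is sent to $(X,\tilde T)$ by a measurable injection, which is bi-measurable by Purves' theorem, so Proposition~\ref{prop:invariance_csiszar_index}~\ref{enum:bijection_invariance} gives the exact identity $S_f(U,V)=S_f((X,\tilde T),(Y,\tilde W))$. It then discards $(\tilde T,\tilde W)$ with the Markov-kernel invariance, where the kernels $P_{\tilde T|X=x}$ genuinely depend on $x$.

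You instead sandwich $S_f(U,V)$. Since $U$ is by construction a measurable function of $(X,T)$, contraction gives $S_f(U,V)\le S_f((X,T),(Y,W))=S_f(X,Y)$. Part (i) applied to the copula $\Cip_{(X,Y)}$ gives the reverse inequality. In effect you use data processing along $(X,T)\to U\to X$ in both directions.

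Your version is lighter in several ways. You never need a measurable injection recovering $(X,\tilde T)$ from $U$, which only works almost surely (e.g.\ $X_i=F_{X_i}^{-1}(U_i)$ holds only a.s.) and takes some care to set up. You need no bi-measurability result. Because you keep the untruncated $T,W$, the kernels in the Markov step are constant.

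The Polish-space caveat you raise can also be dropped. After reordering coordinates, $P_{(X,T)}\otimes P_{(Y,W)}=(P_X\otimes P_Y)\otimes P_T\otimes P_W$ and $P_{(X,T,Y,W)}=P_{(X,Y)}\otimes P_T\otimes P_W$. So the density ratio is a function of $(x,y)$ alone, and Proposition~\ref{prop:inv-gen} with the projection $(x,t,y,w)\mapsto(x,y)$ already gives $S_f((X,T),(Y,W))=S_f(X,Y)$.

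In return, the paper's route gives a direct identity that does not rely on (i). It also makes explicit that $U$ carries exactly the information of $(X,\tilde T)$, which is the structural reason interpolating copulas create no spurious dependence.
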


\begin{rem}[The two dimensional case]
  \label{rem:2d}
Let  $X$ and $Y$ be  real-valued random variables. Since   the  copulas $C_X$ for
the univariate random variable  $X$ and $C_Y$ for the univariate $Y$ are the identity map on
  $[0,1]$,  equation~\eqref{eq:S_f_copula} can be rewritten as:
    \begin{equation} 
    \label{eq:S_f_copula-d=2}
        S_f(X,Y) =   D_f(\Pi \, \Vert\,  \Cb_{(X,Y)})
        = \min D_f(\Pi  \Vert C_{(X,Y)}),
    \end{equation} 
    where the copula $\Pi(u,v)=uv$ for  $u,v\in [0,1]$ corresponds to the
    cdf of two independent uniform random variables on $[0, 1]$, and the
    infimum is taken over all the copula $C_{(X,Y)}$ for $(X,Y)$. Notice
    that the checkerboard copula $\Cb_{(X,Y)}$ could have been replaced
    by any  interpolating copula $\Cip_{(X,Y)}$. 
\end{rem}

\begin{rem}[Invariance by increasing transformation] 
  \label{rem:cop-smooth}
Let  $X$ and $Y$ be  real-valued random variables  for simplicity. 
From the expression~\eqref{eq:S_f_copula} of $S_f(X,Y)$ in terms of a
copula, we  deduce that the Csiszár  index is invariant 
when $X$ or $Y$ are transformed by  strictly increasing functions. This
can also be deduced from
Proposition~\ref{prop:invariance_csiszar_index}~{\it \ref{enum:bijection_invariance}}.
\end{rem}

If all coordinates of $X$ and $Y$ have atom-free distributions, {\it i.e.} they have continuous cdf and  $\Delta(X_i)=\Delta(Y_j)=\emptyset$ for all $i,j$, then the copula for $(X,Y)$ is unique and~\eqref{eq:ineq-cop-Csiszar} is thus an equality. 

\begin{exple}[Inequality~\eqref{eq:ineq-cop-Csiszar} might be strict] 
  \label{exple:different_copula_give_different_index}
  Let  $(X,Y)$  be  a  couple  of Bernoulli  random  variables  and  set $p=\P(X=1)$,  $q=\P(Y=1)  $ and  $r=\P(X=1,Y=1) $ with $p$, $q$ and $r$ satifying~\eqref{eq:pqr}. 
Set:
  \[
    \theta = \frac{r-pq}{pq(1-p)(1-q)},
  \]
and assume that $\theta$ belongs to  $ [-1, 1]$. Then, 
the 
Farlie-Gumbel-Morgenstern copula $C_\theta$ with density 
 $    \partial^2_{u,v} C_\theta (u,c)=c_\theta(u,v) = 1+  \theta (1-2u)(1-2v)$  
 for $u,v\in [0, 1]$ is a copula for $(X,Y)$ as 
$C_\theta(1-p, 1-q) = 1-p-q+r$.
    Then, using that for $f\in \F$:
    \begin{align*}
      S_f(X,Y) &= f\left( \frac{(1-p)(1-q)}{1-p-q+r}\right) (1-p-q+r) 
      + f\left( \frac{p(1-q)}{p-r}\right) (p-r) \\
      &+ f\left( \frac{q(1-p)}{q-r}\right) (q-r) + f\left( \frac{pq}{r}\right) r, 
    \end{align*} 
we get for the Pearson $\chi^2$ divergence (that is, $f_\mathrm{P}(t)=t^2-1$)
that: 
\[
D_\mathrm{P}(\Pi \, \Vert\,  \Cb) =S_{f_\mathrm{P}}(X,Y)=  \frac{\left(p q -
        r\right)^{2} \left(p^{2} q + p q^{2} - 2 p q r - p q +
        r^{2}\right)}{r \left(p - r\right) \left(q - r\right) \left(p +
        q - r - 1\right)}  ,
\]
 with $\Pi$ the independent copula
 and $\Cb$ the  checkerboard copula for $(X,Y)$ as well as:
\[
   D_\mathrm{P}(\Pi \Vert C_\theta) = \inv{2 \theta} \left(\mathrm{Li}_2(\theta) - \mathrm{Li}_2(-\theta) \right) - 1,
\]
with  $\mathrm{Li}_2(\theta)=\sum_{k\in \N^*} \theta^k/k^2$  the dilogarithm function. 

The densities of the corresponding  checkerboard copula and 
Farlie-Gumbel-Morgenstern are represented in  Fig.~\ref{fig:density_interpolating_and_FGM_copula} for the particular case $p=q=1/2$ and $r=5/16$ and thus $\theta = 1$; in this case, we have $D_\mathrm{P}(\Pi \, \Vert\,  \Cb) =S_{f_\mathrm{P}} (X,Y)=1/15< (\pi^2/8) -1=D_\mathrm{P}(\Pi \, \Vert\,  C_\theta)$, which proves that the inequality~\eqref{eq:ineq-cop-Csiszar} is strict for this example. 
\end{exple}

  \begin{figure}[htbp]
    \centering
    \begin{subfigure}[t]{0.49\textwidth}
        \centering
        \begin{tikzpicture}[scale=0.8]
            \begin{axis}[
                view={-10}{25}, 
                xlabel={$u$},
                xlabel style={font=\large},
                ylabel={$v$},
                ylabel style={font=\large},
                grid=major,
                domain=0:1,
                y domain=0:1,
                zmin=0.5,      
                zmax=1.5, 
                samples=\nsamples,
                samples y=\nsamples,
                enlargelimits=false,
                xtick={0,0.5,1},
                ytick={0,0.5,1},
                ztick={0,0.5,1,1.5},
                shader=flat,
            ]

            \addplot3[
                patch,
                patch type=bilinear,
                shader=flat,
            ] table[meta=meta] {
                x y z meta
                0   0   1.25   1.25
                0.5 0   1.25   1.25
                0.5 0.5 1.25   1.25
                0   0.5 1.25   1.25

                0.5 0.5 1.25   1.25
                1   0.5 1.25   1.25
                1   1   1.25   1.25
                0.5 1   1.25   1.25

                0   0.5 0.75   0.75
                0.5 0.5 0.75   0.75
                0.5 1   0.75   0.75
                0   1   0.75   0.75

                0.5 0   0.75   0.75
                1   0   0.75   0.75
                1   0.5 0.75   0.75
                0.5 0.5 0.75   0.75
            };

            \end{axis}
        \end{tikzpicture}
        \caption{Density of the checkerboard copula $\Cb$.}
        \label{fig:density_interpolating_copula}
    \end{subfigure}
    \hfill
    \begin{subfigure}[t]{0.49\textwidth}
        \centering
        \begin{tikzpicture}[scale=0.8]
            \begin{axis}[
                view={-10}{25}, 
                xlabel={$u$},
                xlabel style={font=\large},
                ylabel={$v$},
                ylabel style={font=\large},
                grid=major,
                domain=0:1,
                y domain=0:1,
                samples=\nsamples,
                samples y=\nsamples,
                enlargelimits=false,
                xtick={0,0.5,1},
                ytick={0,0.5,1},
                ztick={0,0.5,1,1.5,2},
                shader=flat,
            ]

            \addplot3[surf, shader=interp
            ] 
            {1 + (1-2*x)*(1-2*y)};
            \end{axis}
        \end{tikzpicture}
        \caption{Density of the Farlie-Gumbel-Morgenstern copula $C_\theta$ (with $\theta = 1$).}
        \label{fig:density_FGM_copula}
    \end{subfigure}
    \caption{Two copulas for the discrete random variable $(X,Y)$ with Bernoulli marginals from Example~\ref{exple:different_copula_give_different_index}.}
    \label{fig:density_interpolating_and_FGM_copula}
\end{figure}
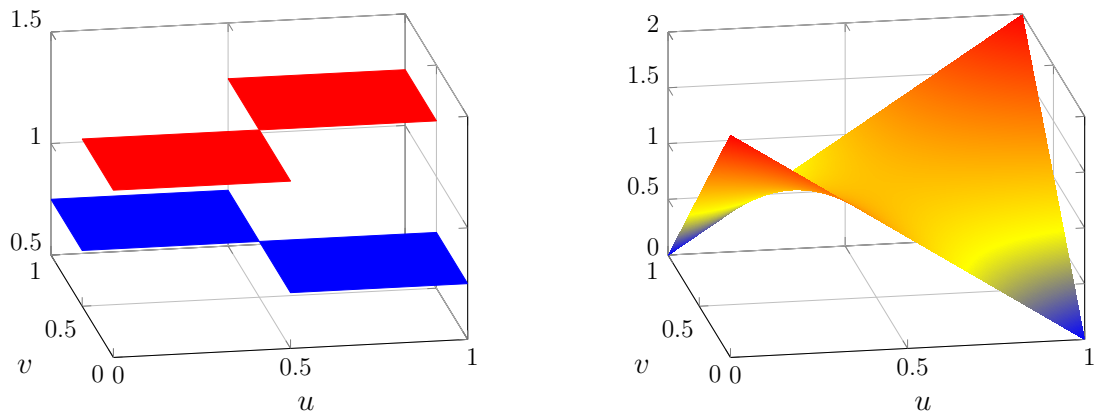 

\clearpage

\bibliographystyle{spbasic}
\bibliography{references}

\clearpage

\section{Additional results and proofs}
\label{sec:proof_introduction}

\subsection{On convex functions}

We give a short proof of the Jensen inequality which covers a more general setup.
Indeed, we do not assume here that $\varphi$ is lower
    semi-continuous; take for example:
    $$
    \varphi(x)=\infty  \ind_{[1, 2]^c}(x) +
    \ind_{\{1,2\}}(x).
    $$
    In this case, it is not true that $\varphi=\sup \ell$, where
    the supremum is taken over all the affine functions $\ell$ such that
    $\varphi\geq  \ell$ and the latter property  is the principal ingredient of a
    classical proof of Jensen inequality.

\begin{lem}[Jensen's inequality]
\label{lem:equality_jensen}
Let X be an $\bar \R$-valued  random variable. Let $\varphi$ be a proper
convex function defined on $\R$ (and by continuity on $\bar \R$). Assume
that $X$  and $\varphi(X)$ are  quasi-integrable (this is  in particular
the case if $X$ is integrable). Then, we have:
\begin{equation}
  \label{eq:def-proper-cvx-E}
 \varphi(\E[X]) \leq \E[\varphi(X)].
\end{equation}

Furthermore if $X$ is integrable and $\varphi$ is strictly convex at
$\E[X]$, then the inequality~\eqref{eq:def-proper-cvx-E} is an equality
if and only if a.s.\ $X =\E[X]$.
\end{lem}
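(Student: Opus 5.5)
The plan is to prove Jensen's inequality without the lower semicontinuity assumption, replacing the representation $\varphi=\sup \ell$ by a single supporting line at the point $m=\E[X]$.

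\textbf{Step 1: reduction to $m\in\R$.} If $\E[X]$ is infinite, say $m=+\infty$, then $\E[X_-]<\infty$ and $\E[X_+]=+\infty$. We will handle this directly using that $\varphi$ has a limit at $+\infty$. If $\varphi(+\infty)\le \E[\varphi(X)]$ fails, then $\varphi(+\infty)$ is finite or $+\infty$. In both situations $\varphi$ is eventually monotone. If $\varphi$ is eventually nondecreasing with a positive slope, then $\E[\varphi(X)]=+\infty$ by linear growth, and the inequality holds. If the slope at infinity is nonpositive, then $\varphi(+\infty)\le \inf$ over a tail, and a convexity/monotonicity argument bounds $\varphi(+\infty)$ by an average of $\varphi$. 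This case is routine but fiddly, and we will mainly treat it by comparing with truncations $X\wedge n$. The core case is therefore $m\in\R$.

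\textbf{Step 2: $m$ finite.} If $\varphi(m)=+\infty$, then $m\notin\dom(\varphi)$. Since the domain is an interval, $\P(X\in\dom\varphi)=1$ would force $m$ into the closed hull of the domain. So either $X$ lies a.s.\ at an endpoint with $\varphi=+\infty$ there, or $X$ takes values outside the domain with positive probability. In both cases $\E[\varphi(X)]=+\infty$. The delicate subcase is $X$ supported in the domain with $m$ equal to an excluded endpoint; this forces $X=m$ a.s., which is then a contradiction.

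Now assume $\varphi(m)$ is finite. If $m$ is interior to the domain, take $c$ in the subdifferential (between the one-sided derivatives) so that $\varphi(x)\ge \varphi(m)+c(x-m)$ for all $x\in\bar\R$, using continuity at $\pm\infty$. If $m$ is a boundary point of the domain, then the convexity of $X$'s law forces $X\in\overline{\dom\varphi}$ a.s. Either $X=m$ a.s., which is trivial, or $X$ puts mass on both sides, which is impossible at a boundary point; hence $X=m$ a.s. Here lower semicontinuity is not needed, because we only evaluate $\varphi$ at $m$ itself.

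In the interior case, take expectations of the affine lower bound. Since $\varphi(X)_-\le |\varphi(m)|+|c||X-m|$ is integrable, $\E[\varphi(X)]$ is well defined and at least $\varphi(m)$.

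\textbf{Step 3: equality case.} Assume $X$ is integrable and $\varphi$ is strictly convex at $m$. Equality means $\E[\varphi(X)-\varphi(m)-c(X-m)]=0$ with a nonnegative integrand, so $\varphi(X)=\varphi(m)+c(X-m)$ a.s. Strict convexity at $m$ rules out $\varphi$ touching a supporting line at two points on opposite sides of $m$. If $\P(X\ne m)>0$ while $\E[X]=m$, there are support points $x<m<y$ where $\varphi$ is affine, and then $m$ is a strict convex combination of $x$ and $y$ with equality in the convexity inequality. This contradicts strict convexity, so $X=m$ a.s.

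The main obstacle is the boundary and infinite cases (Steps 1 and 2), where the supporting line may fail to exist.
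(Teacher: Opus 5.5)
Your core argument is sound, and it takes a different route from the paper's. For the inequality, the paper does not use a supporting line. It reduces to $\E[\varphi(X)]<+\infty$ and $X$ nonconstant, and shows that $\varphi$ is continuous at $\E[X]$. For finite $\E[X]$ this holds because $X\in\dom(\varphi)$ a.s.\ and $X$ nonconstant force $\E[X]$ into the interior of the domain; for infinite $\E[X]$ it is the extension-by-continuity convention. It then passes to the limit in $\varphi(\bar X_n)\le n^{-1}\sum_{k\le n}\varphi(X_k)$ for i.i.d.\ copies, using the strong law of large numbers for quasi-integrable variables. That handles finite and infinite means and boundary points at once, at the cost of the SLLN. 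Your single supporting line at an interior $m$ exists for every convex function, so lower semicontinuity is indeed irrelevant. This is more elementary, but it pushes the infinite mean and the boundary of $\dom(\varphi)$ into separate case analysis. Your equality argument is essentially the paper's.

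That case analysis is where the proposal is not yet a proof.

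\textbf{Step~1.} This is only sketched, and the truncation plan is unnecessary. If $\varphi(+\infty)<+\infty$, every right derivative of $\varphi$ is $\le 0$. So $\varphi$ is nonincreasing and $\varphi(+\infty)=\inf_{\bar\R}\varphi\le\varphi(X)$ pointwise, on all of $\bar\R$ and not just on a tail. If $\varphi(+\infty)=+\infty$, there are two subcases. Either $\dom(\varphi)$ is bounded above, and $\E[X]=+\infty$ gives $\P(X\notin\dom(\varphi))>0$, so $\E[\varphi(X)]=+\infty$ by quasi-integrability. Or there is a supporting line with positive slope at an interior point, and again $\E[\varphi(X)]=+\infty$. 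The case $-\infty$ is symmetric.

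\textbf{Step~2.} You claim that at a boundary point $m$ one has $X\in\overline{\dom(\varphi)}$ a.s., hence $X=m$ a.s. This is false. Take $\varphi=0$ on $\R_+$ and $+\infty$ on $(-\infty,0)$, $m=0$, and $X=\pm1$ with probability $1/2$ each. The correct dichotomy is:
\begin{itemize}
\item either $\P(X\notin\dom(\varphi))>0$, and then $\E[\varphi(X)]=+\infty$ by quasi-integrability;
\item or $X\in\dom(\varphi)$ a.s., so $X$ lies a.s.\ on one side of $m$, and $\E[X]=m$ forces $X=m$ a.s.
\end{itemize}

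\textbf{Step~3.} You wrote this only for interior $m$, and it needs the same dichotomy. Strict convexity at $m$ does not prevent $m$ from being a boundary point with no finite supporting slope. An example is $\varphi(x)=-\sqrt{x}$ on $\R_+$ and $+\infty$ on $(-\infty,0)$, at $m=0$. In that situation equality gives $\E[\varphi(X)]=\varphi(m)<+\infty$, hence $X\in\dom(\varphi)$ a.s., hence $X=m$ a.s.

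With these repairs your proof is complete.
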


\begin{rem}
  \label{rem:jensen}
  Notice   that  $X$   integrable  implies   that  $\varphi(X)$   is
  quasi-integrable   as  $\varphi$   is  convex;   but  $\varphi(X)$
  quasi-integrable does not imply that $X$ is quasi-integrable (take
  $\varphi(x)=|x|$    and    $\E[X_+]=\E[X_-]=\infty$);   and    $X$
  quasi-integrable   does    not   imply either  that    $\varphi(X)$   is
  quasi-integrable (take $\varphi(x)=x+ x_+^2$,
  $\E[X_-]=\E[X^2_+]=+\infty$ and  $\E[X_+]<\infty$).
\end{rem}

\begin{proof}[Proof of Lemma~\ref{lem:equality_jensen}]
  First notice that to prove~\eqref{eq:def-proper-cvx-E}, it is
  enough to  consider the case  $\E[\varphi(X)]\in [-\infty , + \infty )$
  and   $X$   non  constant.

  We then claim  that $\varphi$ is continuous  at $\E[X]$.  Indeed,
  since this is  the case by convention if $\E[X]=\pm\infty  $, we shall
  only consider the case $X$ integrable  and thus a.s.\ $X\in \R$. Then,
  since a.s.\ $\varphi(X)<+\infty $, we deduce that a.s.\ $X$ belongs to
  $\dom(\varphi)$. The latter being convex, $X$ non constant and $\E[X]$
  finite,  we   deduce  that   $\E[X]$  belongs   to  the   interior  of
  $\dom(\varphi)$. As the function $\varphi$ is convex, it is continuous
  in  the interior  of  its  domain. In conclusion,  the  function $\varphi$  is
  continuous at $\E[X]\in \bar \R$.

 Let $(X_n, n\in \N^*)$ be independent random
  variables distributed as $X$. Set $\bar X_n=n^{-1} \sum_{k=1}^n X_k$
  for $n\in \N^*$.
  Since $X$ and $\varphi(X)$ are quasi-integrable, the
  strong law of large number gives that a.s.\ 
  $$
  \lim_{n\rightarrow \infty
  } \bar X_n= \E[X] \text{ and }\lim_{n\rightarrow \infty
  } n^{-1} \sum_{k=1}^n \varphi(X_k)= \E[\varphi(X)].
  $$
Since $\varphi$ is convex, we also get that $\varphi(\bar X_n) \leq
n^{-1} \sum_{k=1}^n \varphi(X_k)$. Since $\varphi$ is continuous at
$\E[X]$,
we deduce that a.s.\ $\lim_{n\rightarrow \infty
} \varphi(\bar X_n) = \varphi(\E[X])$;
and thus~\eqref{eq:def-proper-cvx-E}
holds.

Assume  that $X$  is  integrable  and $\varphi$  is  strictly convex  at
$\E[X]\in \R$.  This implies that
$\E[X]\in \dom (\varphi)$  and as~\eqref{eq:def-proper-cvx-E} is an
equality, we deduce that a.s.\ $X \in \dom  (\varphi)$.  We shall
assume  that $X$  is  not  constant. Since  $X$  is  integrable and  non
constant,  this  implies  that  $\E[X]$   belongs  to  the  interior  of
$\dom ( \varphi)$.   Thus,  there  exists an  affine  function  $g$  which
coincides with $\varphi$  at $\E[X]$ and such that $\varphi  \geq g$ and
$\varphi(t)>g(t)$ for all $t>\E[X]$ or  for all $t<\E[X]$.  Without loss
of  generality, we  can  assume that  $\varphi>g$  on $A=\{t>  \E[X]\}$.
Since   $X$  is   not  constant,   we  get   $\P(X\in  A)>0$   and  thus
$\E[\varphi(X)]=        \E[\varphi(X)\ind_{\{X\in         A\}}]        +
\E[\varphi(X)\ind_{\{X\not \in A\}}]> \E[g(X)]=g(\E[X])=\varphi(\E[X])$.
So if~\eqref{eq:def-proper-cvx-E}  is an equality, we  deduce that a.s.\
$X=\E[X]$. \hfill \qed
\end{proof}

We now give an
elementary lemma on convex functions. Recall that a function $\varphi\in
\F$ might not be continuous at the boundary of its domain.
Recall the convention $yf^*(0)=0$ if $y=0$.
\begin{lem}[Property of convex functions]
\label{lem:cvx-prop}
Let $f\in \F$ and $z\in \R_+$. We have for all $x\in [0,z]$:
\begin{equation}
   \label{eq:ineg-str-cvx}
   f(z) \leq   f(x) + (z-x) f^*(0).
\end{equation}
  Furthermore, if $f$ is strictly convex at $z>0$ the
  inequality~\eqref{eq:ineg-str-cvx} is strict for all $x\in [0, z)$.
\end{lem}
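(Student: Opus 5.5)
The plan is to derive \eqref{eq:ineg-str-cvx} from the monotonicity of the slopes of a convex function, together with the identification $f^*(0)=\lim_{t\to+\infty} f(t)/t$. For $x<z$, the slope of the chord from $x$ to $y$ tends to $f^*(0)$ as $y\to+\infty$. The strict version will come from comparing the chord from $x$ to $z$ with the chord from $x$ to some $y>z$.

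First I dispose of the trivial cases.
\begin{itemize}
\item If $x=z$, both sides equal $f(z)$, using the convention $0\cdot f^*(0)=0$.
\item Assume $x<z$. If $f(x)=+\infty$ or $f^*(0)=+\infty$, the right-hand side is $+\infty$. Here I must check that $f^*(0)>-\infty$. Since $1$ is interior to $\dom(f)$, the slopes $(f(t)-f(1))/(t-1)$ are nondecreasing in $t>1$. Hence $f^*(0)=\lim_{t\to\infty}f(t)/t=\lim_{t\to\infty}(f(t)-f(1))/(t-1)$ exists and exceeds $-\infty$.
\end{itemize}
The inequality then holds, and it is strict as soon as $f(z)<+\infty$. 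This is the case when $f$ is strictly convex at $z$, since strict convexity at $z$ forces $z\in\dom(f)$.

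So assume $x<z$, $f(x)\in\R$ and $f^*(0)\in\R$. Since $f(t)/t$ has a finite limit, $f$ is finite for all large $t$. As $\dom(f)$ is convex and contains $x$, it contains $[x,+\infty)$; in particular $f(z)$ and all $f(y)$ with $y\ge x$ are finite. This also covers $x=0$, since $f(0)=\lim_{t\to 0+}f(t)$ is then finite and the convexity inequality~\eqref{eq:def-proper-cvx} still applies at $0$.

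By convexity, the chord slope $s(y)=(f(y)-f(x))/(y-x)$ is nondecreasing on $(x,+\infty)$. Moreover $s(y)\to\lim_{y\to\infty} f(y)/y=f^*(0)$. Hence $s(z)\le f^*(0)$, which is exactly $f(z)\le f(x)+(z-x)f^*(0)$.

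For the strict inequality, suppose $f$ is strictly convex at $z>0$ and $x\in[0,z)$. Pick any $y>z$ and write $z=\alpha x+(1-\alpha)y$ with $\alpha\in(0,1)$. Strict convexity at $z$ gives $f(z)<\alpha f(x)+(1-\alpha)f(y)$, which is equivalent to $s(z)<s(y)$. Combined with $s(y)\le f^*(0)$ from monotonicity, this yields $s(z)<f^*(0)$, i.e.\ strict inequality in \eqref{eq:ineg-str-cvx}.

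The only delicate point is the bookkeeping of infinite values and the boundary point $0$, since $f$ need not be continuous there. This is handled by the case split above and by using the convexity inequality directly rather than any continuity of $f$.
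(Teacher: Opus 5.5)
Your proof is correct and follows essentially the paper's argument. Both compare chord slopes with the asymptotic slope $f^*(0)=\lim_{t\to\infty}f(t)/t$, and your strict part ($s(z)<s(y)\le f^*(0)$) matches the paper's comparison of the chords on $[x,z]$ and $[z,y]$.

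The only difference is in the non-strict inequality. The paper writes $z$ as a convex combination of $x/(1-\varepsilon(z-x))$ and $1/\varepsilon$, so it must invoke $f(x+)\le f(x)$ when letting $\varepsilon\to 0$. By anchoring the chord at $x$ itself and using monotonicity of $s$, you avoid any continuity issue at $x$. You also make explicit that $f^*(0)>-\infty$, which the paper leaves implicit.
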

\begin{proof}
  The  result is  clear if  $x=z$ or  $x\not\in \dom  (f)$ or  $x<z$ and
  $f^*(0)=+\infty  $.   We  assume   that  $x\in  \dom(f)$,   $x<z$  and
  $f^*(0)<+\infty    $.    The    latter    condition    implies    that
  $\sup \dom(f)=+\infty  $.
  Set $y = z-x \in \R_+^*$.
  For  $\varepsilon\in (0,  1/y)$, we  get by
  convexity:
   \[
f(x+y) \leq  (1-\varepsilon y) f\left(\frac{x}{1- \varepsilon y}\right)
+ y \varepsilon f \left(\inv{\varepsilon}\right).
\]
Notice that  $f$ is continuous at $x$ if $x$ belongs to the interior of
the domain of $f$ and it has a right limit, $f(x+)$ at $x=
\inf \dom(f)$, and that in both cases $f(x+)\leq  f(x)$.
Then, letting $\varepsilon$ goes down to $0$, we get
$  f(x+y) \leq  f(x+) + yf^*(0)\leq  f(x) + y f^*(0)$.

Suppose that  $f$ is strictly convex at $z>0$. Let $x\in [0, z)$,
$\alpha\in (0, 1)$ and $y>z$ such that $z=\alpha x + (1-\alpha)
y$. Since $f$ is strictly convex at $z$, we get $\alpha(f(z)-f(x)) <
(1-\alpha) ( f(y) -f(z))$, that is:
\[
  \frac{f(z)- f(x)}{z-x} < \frac{f(y) - f(z)}{y-z}\leq  f^*(0),
\]
where we used~\eqref{eq:ineg-str-cvx} (with $z$ and $x$ replaced by $y$
and $z$) and $f(z)$ finite for the last inequality.
\end{proof}


\subsection{Proofs of results in Section~\ref{sec:csiszar_divergence}}

\begin{proof}[Proof of Lemma~\ref{lem:f_div_properties}]
  \label{p:Df-prop}
 {\it ~\ref{enum:f_div_prop_nonnegativity}} Let $h= \rd P/ \rd Q$, which is defined $(P+Q)$-a.e.. Set $x=
  Q(h)=1- P(h=+\infty )=1- P(q=0)\in [0, 1]$.
Using~\eqref{eq:definition_df0},  Jensen's inequality and then
Lemma~\ref{lem:cvx-prop} with $z=1$, we get:
\begin{equation}
\label{eq:proof_df_positive_inegalite1}
 D_f(P\Vert Q) = Q(f(h))+ f^*(0) P(q=0) \geq   f\left( x \right) + (1-x) f^{*}(0) \geq f(1) = 0.
\end{equation}
This proves the  nonnegativity of
the $f$-divergence.

{\it \ref{enum:f_div_prop_reflexivity}} We now assume that $f$ is strictly convex at 1 and $D_f(P\Vert Q)=0$, so
all the inequalities in~\eqref{eq:proof_df_positive_inegalite1} are
equalities. By Lemma~\ref{lem:cvx-prop} the first one implies that
$x=1$; by Jensen inequality, as $Q(h)=x=1$, the first one implies that $h=1$ $Q$-a.s.,
that is $P=Q$ by~\eqref{eq:Leb-decomp}.

\medskip

{\it \ref{enum:div_prop_supremum}} We     now    give     the    supremum  of the  $f$-divergence.   The functions  $f$  and $f^*$  being
convex   and  zero   at  $1$,   we  have   $f(x)  \leq   (1-x)f(0)$  and
$f^*(x)   \leq    (1-x)f^*(0)$   for   $x   \in    [0,1]$.   We   deduce
from~\eqref{eq:decom_Df} that:
\begin{eqnarray}
\label{eq:upper_bound_Df}
D_f(P\Vert Q) &\leq & f(0) \int_{0\leq  p<q} (q-p) \, \rd \lambda + f^*(0)
\int_{0\leq  q<p}(p-q)  \, \rd \lambda \nonumber \\
&= &(f(0)+f^*(0)) \int_\Omega (q-p)_+ \, \rd \lambda.
\end{eqnarray}
Since the last integral is  belongs to [0,1], we have that $D_f(P \Vert
Q) \leq f(0) + f^*(0)$. Furthermore, if we take $P$ and $Q$ mutually
singular, we directly get  $D_f(P\Vert Q) = f(0) + f^*(0)$ from the definition. This
gives{\it ~\ref{enum:div_prop_supremum}}.

{\it \ref{enum:div_prop_condition_sup}} Assume $D_f(P\Vert Q) = (f + f^*)(0)$ and
$(f+f^*)(0)$ is positive and  finite.  We deduce that the last integral
in~\eqref{eq:upper_bound_Df} has to be equal to 1, which means that $P$ and $Q$
are singular. 

\medskip

The    duality    formula{\it ~\ref{enum:f_div_prop_duality}}  is    obvious
from~\eqref{eq:decom_Df}.  For    the  invariance
property{\it ~\ref{enum:f_div_prop_invariance}}, notice  that for $g=  f+ f_c$
with   $f_c\in  \F$   and  $f_c=c   (\id   -1)$  on   $\R_+$,  we   have
$D_g=D_f+ D_{f_c}=D_f$ thanks to Remark~\ref{rem:affine}.

\medskip

{\it ~\ref{enum:f_div_prop_symmetry}} We   follow~\cite{osterreicher2002csiszar}   for   the  proof   of   the
symmetry{\it ~\ref{enum:f_div_prop_symmetry}}.                             Let
$A\in    \cf\setminus\{\emptyset,    \Omega\}$     and    assume    that
$  D_f(P \Vert  Q) =  D_{f}(Q  \Vert P)  $  for all  $P,Q\in \cp$.   Let
$x\in  A$  and  $y\in  A^c$,   and  consider  the  probability  measures
$P_t=t \delta_x +  (1-t)\delta_y$ for $t\in [0, 1]$.  For $s,t\in (0,1)$
and $u=t/s$, we have:
\begin{eqnarray}
   \label{eq:Df=B(s,t)}
   D_f(P_s \Vert P_t) &=& f(s/t) t + f((1-s)/(1-t)) (1-t) \nonumber \\
   &=& s f^*(u)+ (1-su) f \left( \frac{1-s}{1-su}\right).
 \end{eqnarray}
 Since  by assumption
 $  D_f(P_s \Vert P_t)=   D_f(P_t \Vert P_s)$,
we obtain that for $s\in (0, 1)$ and $u\in \R^*_+$ such that $su<1$:
 \[
   s f^*(u)+ (1-su) f \left( \frac{1-s}{1-su}\right)
   =  s f(u)+ (1-su) f^* \left( \frac{1-s}{1-su}\right).
 \]
 Letting $s$ go down to $0$, and using that $1$ belongs to the
 interior of the domains of $f$ and $f^*$, we deduce that $f(u)=+\infty
 $ if and only if $f^*(u)=+\infty $. This gives that
 $\dom(f^*)=\dom(f)$.
 For $u\in \dom(f)$ and $su<1$ with $s$ small enough so that
 $(1-s)/(1-su)\in \dom(f)$, we get that:
 \[
   (f^*-f)(u)=(f^* -f) \left( \frac{1-s}{1-su}\right)
   \frac{1-su}{s}\cdot
 \]
Since $f^*(1) = f(1)$, we can further write
 \[
   (f^*-f)(u)= \frac{(f^*-f)((1-s)/(1-su)) - (f^*-f)(1)}{(1-s)/(1-su) - 1 }\cdot (u-1).
\] 
Notice that $f$ and  $f^*$ have a finite right and left  derivative at $1$ and
 that,  as $(f^*)'(1+)=-f'(1-)$,  $(f^*-f)'(1+)= (f^* -f)'(1-)$. Denote
 by $2c$ this latter quantity. 

 Letting
 $s$ and $t$ go to $0$ with  $u=t/s\in \dom(f)$ fixed, we deduce that
 $ (f^*-f)(u)=2c(u-1)$. Recall that $\dom(f)=\dom(f^*)$. Set $g=f + c(\id-1)$ to get  $g\in \F$ and
 $g^*=g$. Hence, taking $h=g/2$ gives that $h\in \F$ and
 $f=h+h^* + c(\id-1)$. 
 
 On the other hand, if $h\in \F$ and $c\in \R$, setting
$f=h+h^* + c(\id-1)$, we get that $\dom(f)=\dom(f^*)$ and
 $f^* -f = 2 c(\id-1)$ on $\dom(f)$. Then, use the invariance property{\it ~\ref{enum:f_div_prop_invariance}} to get that $D_f(P  \Vert Q)
 = D_{f}(Q\Vert  P)$ for all $P,Q\in  \cp$.
 This finishes the proof of{\it ~\ref{enum:f_div_prop_symmetry}}.

{\it \ref{enum:f_div_prop_range}} Let $\crr$ denote the range of possible values of $D_f$. Assume that $f$ is finite on
$\R_+^*$.
Letting $s$ go from 0 to 1 in~\eqref{eq:Df=B(s,t)} with $u\in (0, 1)$
fixed, we get by continuity of $f$ that $(0, f^* (u) + (1-u) f(0))\subset \crr$
and then, letting $u$ go down to $0$,  that $(0, (f^*+f)(0))\subset \crr$.
Then use{\it ~\ref{enum:f_div_prop_nonnegativity}}
and{\it ~\ref{enum:div_prop_supremum}} to deduce that
$\crr=[0,(f^*+f)(0)]$. This gives{\it ~\ref{enum:f_div_prop_range}}. 
\end{proof}

\begin{rem}
  \label{rem:PL-ext}
  Let $(\Omega, \cf, \lambda)$ be a non-trivial $\sigma$-finite measured
  space: there  exist $A,  B\in \cf$ disjoint  and with  positive finite
  measures.    Using    the    probability    distributions
  $$
  P_s=\left(s \lambda(A)^{-1} \ind _A + (1-s) \lambda(B)^{-1} \ind _{B}
  \right)\,  \lambda,
  $$ we  easily get~\eqref{eq:Df=B(s,t)}  and can  thus
  adapt   the   proof  above  to    deduce that the
  properties{\it ~\ref{enum:f_div_prop_symmetry}}
  and{\it ~\ref{enum:f_div_prop_range}}  hold  also  with  $\cp$  replaced  by
  $\cp_\lambda\subset\cp$ the  subset of probability measures  which are
  dominated by $\lambda$.
\end{rem}

\begin{proof}[Proof of Proposition~\ref{prop:inv-gen}]
  Set   $h=   \rd    P/\rd   Q$,
    $P' = \varphi_ \sharp P $  and $Q' = \varphi_ \sharp Q$
   for   simplicity.     Since   $h$   is
  $\sigma(\varphi)$-measurable,  there exists  a nonnegative  measurable
  function  $h'$  defined on  $E$  such  that $h=h'\circ  \varphi$.   By
  construction,  we have  for  any nonnegative  measurable function  $g$
  defined on $\Omega$:
\[
  P'(g)=  P(g\circ \varphi)
  =Q(h\, g\circ \varphi) + P( g\circ \varphi \, \ind_{\{h=\infty \}})
  =  Q' (h' \, g) + P'(g \,  \, \ind_{\{h'=\infty \}}).
\]
This gives $\rd P'= h'\, \rd Q' + \ind_{\{h'=\infty \}}\rd P'$ and, by
symmetry, we deduce that~\eqref{eq:Leb-decomp} holds with $\nu=P'$ and
$\mu=Q'$. By uniqueness of the Lebesgue decomposition, we get that  $
\rd P'/ \rd Q'=h'$ (with the
convention~\eqref{eq:Leb-dd2}). This proves the first part of the lemma.

Let $f\in \F$. Using~\eqref{eq:chgt-var}, we get that:
\begin{align*}
D_f( P' \, \Vert \,  Q')
 & = Q'(f(h'))+ f^*(0) \, P' \left( h' =\infty \right)\\
&  = Q(f(h)) + f^*(0) \, P \left( h =\infty \right)
= D_f( P \, \Vert \,  Q).
\end{align*}
This concludes the proof. 
\end{proof}

\begin{proof}[Proof of Theorem~\ref{thm:increase-Df}]
For simplicity, we write  $P'=\varphi_\sharp P$ and $Q'=\varphi_\sharp
Q$, and consider the probability measures
$\lambda=(P+Q)/2$ and $\lambda'=\varphi_\sharp \lambda$, that is,
$\lambda'=(P'+Q')/2$. We denote by $p$  (resp.\ $p'$)
the density of $P$  (resp.\ $P'$ ) w.r.t.\ $\lambda$
(resp.\ $\lambda'$), and similarly for $q$ and $q'$.
 Set  $\tilde p=p'\circ \varphi$ and define $\tilde q$ similarly. As:
\[
     \E_\lambda[p'\circ \varphi\, j\circ \varphi]
    =\E_{\lambda'}[p' \, j]=\E_{P'}[j]=\E_P[j\circ
    \varphi]=\E_\lambda[p\, j\circ
    \varphi]=\E_\lambda[\E_\lambda[p\, |\, \varphi] \, j\circ \varphi],
    \]
 with $j$ any nonnegative measurable function defined on $E$, we deduce
 that $\lambda$-a.s.:
\[
  \tilde p =\E_\lambda[p\,
  |\, \varphi]
  \quad\text{and}\quad
  \tilde q =\E_\lambda[q\,
  |\, \varphi].
\]

Since $\E_\lambda[p \ind_{\{\tilde p =0\}}]= \E_\lambda[\tilde p
\ind_{\{\tilde p =0\}}]= 0$, we deduce that:
\begin{equation}
   \label{eq:p-tp=0}
  \ind_{\{p>0, \tilde p=0\}}=0
  \quad\text{$\lambda$-a.s.}
  \quad\text{as well as}\quad
  \tilde p>0 \quad P\text{-a.s.,}
\end{equation}
and similarly for $q$.
We set:
\[
  h_+= \E_\lambda[ p\,  \ind_{\{q>0\}} \, |\, \varphi]
  \quad\text{and}\quad
  h_0= \E_\lambda[ p\,  \ind_{\{q=0\}} \, |\, \varphi],
\]
so that $\tilde p= h_+ + h_0$. Using~\eqref{eq:p-tp=0} with $q$ instead of $p$, we get that $h_+=0$ on
$\{\tilde q=0\}$ and we set:
\[
  g_+=\frac{h_+}{\tilde q},
\]
which by convention is $0$ on $\{\tilde q=0\}$, and similarly, as $h_0=0$ on
$\{ \tilde p=0\}$:
\[
  g_0= \frac{h_0}{\tilde p},
\]
which by convention is $0$ on $\{\tilde p=0\}$.
We claim that:
\begin{equation}
   \label{eq:g+g0}
  g_+ = \E_Q\left[\frac{\rd P}{\rd Q} \, \big |\, \varphi\right]
  \quad Q\text{-a.s.}
  \quad\text{and}\quad
  g_0= \P\left(\frac{\rd P}{\rd Q}=+\infty  \, \big |\, \varphi\right)
  \quad P\text{-a.s.},
\end{equation}
where by convention $P_\mu(A\, |\, \varphi)=\E_\mu[\ind_A \,|\, \varphi]$.
Indeed, the representation of $g_+$ (which is
$\sigma(\varphi)$-measurable) is a consequence of:
\[
  \E_Q\left[ \frac{\rd P}{\rd Q} \, j \circ \varphi\right]
  = \E_\lambda [p \ind_{\{q>0\}}\, j\circ \varphi]
  = \E_\lambda[h_+ \, j\circ \varphi]
  = \E_\lambda[q \, g_+ \, j\circ \varphi]
  = \E_Q[ g_+ \, j\circ \varphi],
\]
with $j$ any nonnegative measurable function defined on $E$.
The proof of the representation of $g_0$ is similar (use  $
  \E_P\left[\ind_{\{ \frac{\rd P}{\rd Q}=+\infty \}} \, j \circ \varphi\right]
  = \E_\lambda [p \ind_{\{q=0\}}\, j\circ \varphi]
  = \E_\lambda[h_0 \, j\circ \varphi]
  = \E_\lambda[p \, g_0 \, j\circ \varphi]
  = \E_P[ g_0 \, j\circ \varphi]$,
with $j$ any nonnegative measurable function defined on $E$).
 We deduce   that:
\begin{equation}
   \label{eq:P'/Q'}
   \frac{\rd P'}{\rd Q'}\circ \varphi
   = \frac{\tilde p}{\tilde q}
     = g_+  + \frac{\tilde p }{\tilde q} g_0
   = g_+ \ind_{\{\tilde q >0\}} + \frac{\tilde p }{\tilde q} g_0
   \ind_{\{\tilde p >0\}}.
 \end{equation}

 The interesting contribution to $\rd P'/\rd Q'$ is on $\{ \tilde p\tilde  q >0\}$. We need to distinguish two sub-case. Set:
 \[
   \widetilde {pq} =\E_\lambda [pq\, |\, \varphi].
 \]
 On $\{ \widetilde {pq}=0, \, \tilde p \tilde q >0\}$ we get $h_+=0$ and
 thus  $g_+=0$   and  $g_0=1$,   and  there   is  no   more  information
 which can be deduced from~\eqref{eq:P'/Q'}.
 On $\{ \widetilde {pq}>0\}$, we get that $\E_\lambda[ p\,
 \ind_{\{q>0\}} \, |\, \varphi]>0$ and thus $h_0< \tilde p$ and $g_0<1$
 (as $\tilde p>0$). This gives that:
 \[
   \frac{\tilde p}{\tilde q}= \frac{g_+}{1- g_0}\in (0, +\infty ).
 \]
 This can be rewritten as:
 \[
   \frac{\tilde p}{\tilde q}=
   \frac{\E_Q\left[\frac{\rd P}{\rd Q} \, \big |\, \varphi\right]}
   {\P\left(\frac{\rd P}{\rd Q}<+\infty  \, \big |\, \varphi\right)}
   \in (0, +\infty )
   \quad\text{on}\quad
  \{ \widetilde {pq}>0\}=  \{ \widetilde {pq} \tilde p \tilde q>0\}.
 \]

 \medskip

In a second step
we
write  $  D_f (P' \, \Vert\, Q')=A+ f^*(0) \,  B$ with:
\[
  A= \E_{\lambda'} \left[ f\left(\frac{p'}{q'} \right)
    q'\right]
  =\E_{\lambda} \left[ f\left(\frac{\tilde p}{\tilde q} \right)
    \tilde q\right]
    \]
and
\[
B=\E_{\lambda'} [p'\, \ind_{\{q'=0\}}]
=\E_{\lambda} [\tilde p\, \ind_{\{\tilde q=0\}}]
=\E_{\lambda} [ p\, \ind_{\{\tilde q=0\}}].
\]
We have  that:
\begin{align}
  \label{eq:A1-majo1}
  A= \E_{\lambda} \left[ f\left(\frac{\tilde p}{\tilde q} \right)
     q\right]
  = \E_Q\left[ f\left(g_+ + \frac{h_0 }{\tilde q}  \right)\right]
 & \leq  \E_Q[f(g_+)] + f^*(0)\, C\\
  \label{eq:A1-majo2}
 & \leq  \E_Q\left[f\left(\frac{\rd P}{\rd Q}\right)\right] +
   f^*(0)\,C,
 \end{align}
where we used  Lemma~\ref{lem:cvx-prop} for the first inequality and
Jensen inequality for the second and:
\begin{equation}
   \label{eq:C}
  C=  \E_Q \left[\frac{h_0 }{\tilde q}  \right]
  = \E_\lambda \left[ q \frac{h_0}{\tilde q}  \right]
  = \E_\lambda \left[ \tilde q  \frac{h_0}{\tilde q}  \right]
  =\E_\lambda \left[ p \ind_{\{q=0, \, \tilde q>0\}}  \right].
\end{equation}
Notice that, thanks to~\eqref{eq:p-tp=0} with $p$ replaced by $q$:
\begin{align*}
  B+C &= \E_\lambda \left[ p \ind_{\{q=0, \, \tilde q=0\}}  \right]
  + \E_\lambda \left[ p \ind_{\{q=0, \, \tilde q>0\}}  \right] \\
  &=\E_\lambda \left[ p \ind_{\{q=0\}}  \right]
  =P( \rd P/\rd Q=\infty ).    
\end{align*}
We thus deduce that:
\[
   D_f (P' \, \Vert\, Q')=A+ f^*(0) \,  B \leq   D_f (P \, \Vert\,
   Q).
 \]

 \medskip

 We                   now                  assume                   that
 $ D_f (P' \, \Vert\, Q')= D_f  (P \, \Vert\, Q)<+\infty $. This implies
 that  the inequalities~\eqref{eq:A1-majo1}  and~\eqref{eq:A1-majo2} are
 equalities.  Since the last  term in the inequality~\eqref{eq:A1-majo1}
 is finite (as $D_f (P \, \Vert\, Q)<+\infty $) we deduce that $Q$-a.s.\
 $f(g_+  +  h_0/\tilde  q)  =  f(g_+)  +  f^*(0)  h_0/  \tilde  q$.   By
 Lemma~\ref{lem:cvx-prop}, as  $f$ is strictly convex  on $\R_+^*$, this
 equality implies that $Q$-a.s.\ $h_0/\tilde  q=0$, that is, $C=0$, and,
 using~\eqref{eq:C},                        we                       get
 $\E_\lambda[p\ind_{\{q=0, \,  \tilde   q   >0\}}]=0$.   Since   trivially
 $\E_\lambda[q\ind_{\{q=0, \,    \tilde    q    >0\}}]=0$    and    $p+q=2$
 $\lambda$-a.s.,   we   get    that   $\ind_{\{q=0, \,   \tilde   q>0\}}=0$
 $\lambda$-a.s., and using~\eqref{eq:p-tp=0} (with $q$  instead of $p$) that
 $\ind_{\{q=0\}}= \ind_{\{\tilde q=0\}}$ $\lambda$-a.s..
 This in turns implies that $\lambda$-a.s.:
 \[
   \ind_{\{\tilde p>0, \tilde q=0\}}
   =  \ind_{\{\tilde p>0,  q=0\}}
   =  \ind_{\{p>0, \tilde p>0,  q=0\}}
   =  \ind_{\{p>0,   q=0\}},
 \]
 using   that $\{p=0, q=0\}$ has zero
 $\lambda$-measure for the second equality, and~\eqref{eq:p-tp=0} for the last; and thus,
as $\tilde q$ is $\sigma(\varphi)$-measurable:
 \begin{equation}
   \label{eq:q=qt}
   \frac{h_0}{\tilde q}
   = \frac{\E_\lambda[p \ind_{\{\tilde q =0\}} \, |\, \varphi]}{\tilde q}
   =\frac{\tilde p}{\tilde q} \ind_{\{\tilde  q=0\}}=\infty  \ind_{\{p>0,   q=0\}}
   =\frac{\rd P}{\rd Q}  \ind_{\{ q=0\}}
   \quad\text{ $\lambda$-a.s..}
 \end{equation}

 Since  $\rd P/\rd  Q$  takes values  in $  \R$  under $Q$,  Theorem~6.3
 in~\cite{k:foundations} provides the existence  of a probability kernel
 on $E\times \cb(\bar \R)$ which is a regular version of the probability
 distribution of  $\rd P/\rd  Q$ under  $Q$ conditionally  on $\varphi$.
 The  equality in  Jensen  inequality~\eqref{eq:A1-majo2}  (for the  the
 probability distribution  of $\rd P/\rd  Q$ under $Q$  conditionally on
 $\varphi$)   is   achieved  if   and   only   if  $Q$-a.s.\   we   have
 $\E_Q[\rd P/\rd Q\, |\, \varphi]=\rd P/\rd Q$.
Since  $\ind_{\{q>0\}}= \ind_{\{\tilde q>0\}}$ $\lambda$-a.s., we get
that   $\lambda$-a.s. $g_+=(\rd P/\rd Q)\ind_{\{q>0\}}$.
With~\eqref{eq:q=qt} and~\eqref{eq:P'/Q'}, this gives:
\[
     \frac{\rd P'}{\rd Q'}\circ \varphi
   = \frac{\tilde p}{\tilde q}
   = g_+  + \frac{h_0 }{\tilde q}
   = \frac{\rd P}{\rd Q}   \quad\text{ $\lambda$-a.s..}
 \] 
\end{proof}

\begin{proof}[Proof of Proposition~\ref{prop:markov}]
Since $X$ and $Z$ are independent conditionally on $Y$, it is elementary
to check that:
\[
  P_{(X,Y,Z)}(\rd x, \rd y, \rd z)= P_{(X,Y)}(\rd x , \rd y)
  P_{Z|Y=y}(\rd z).
\]
  Indeed, let $\phi$  be some nonnegative measurable function.
We have
$$
\E_{(X,Y,Z)}[\phi(Z)|X,Y]=\E_{(Y,Z)}[\phi(Z)|Y]
$$ which we
denote by  $\Phi(Y)$.
 Let $\psi$  be some nonnegative measurable function. We get:
 \[
    \E_{(X,Y,Z)}\left[ h(X,Y) \psi(X,Y) \phi(Z)\right]
   =    \E_{(X,Y,Z)}\left[ h(X,Y) \psi(X,Y) \Phi(Y)\right],
 \]
 which gives that $ P_{(X,Y,Z)}(\rd x, \rd y, \rd z)= P_{(X,Y)}(\rd x , \rd y)
  P_{Z|Y=y}(\rd z)$.

Since  $  P_{Z|Y=y}(\rd   z)=  P_{Z'|Y'=y}(\rd  z)$  for   all  $y$,  we
get~\eqref{eq:densite-Markov}.  Then use  Proposition~\ref{prop:inv-gen}
with         $\varphi(x,y,z)=(x,y)$        to         deduce        that
$   D_f\left(P_{(X',    Y',   Z')}   \,\Vert    \,   P_{(X,Y,Z)}\right)=
D_f\left(P_{(X',   Y')}  \,   \Vert   \,   P_{(X,Y)}\right)$  and   then
Theorem~\ref{thm:increase-Df}     with     $\varphi(x,y,z)=(x,z)$     to
get~\eqref{eq:ineg-Df-Markov}:
$$
  D_f\left(P_{(X',  Z')}   \,\Vert    \,   P_{(X,Z)}\right) \leq
D_f\left(P_{(X',   Y')}  \,   \Vert   \,   P_{(X,Y)}\right)
$$ 
\end{proof}


\subsection{Proofs of results in Section~\ref{sec:csiszar_index}}
\label{sec:proofs_copula}

Before giving the proof of Lemma~\ref{lem:interpol-cop=cop} we provide a
technical lemma. We say  that two events $A$ and $B$  are equal a.s.\ if
$\P(A\cap B^c)+\P(A^c \cap  B)=0$, and use similar notation for the
a.s.\ inclusion.  For  a real-valued random
 variable $X$ with cdf $F$,  we denote by $F^{-1}$  the  generalized (c\`ag-l\`ad)  inverse of  $F$ given  by
  $F^{-1}(t)=\inf\{  s\in  \R  \,   \colon\,  F(s)\geq  t\}$,  with  the
  convention that $\inf \emptyset=+\infty $  and $\inf \R=-\infty $.
 In particular, for
  all $s,  t\in \R$, we have:
  \begin{equation}
      \label{eq:F-1}
      F(s)\leq  t \Longleftrightarrow s\leq   F^{-1} (t).
  \end{equation}
\begin{lem}[Level sets]
  \label{lem:X-FX}
  Let $X$ be a real-valued  random variable with cdf $F$. We have that, for  $x\in \R$:
 \begin{equation}
    \label{eq:F-2}
  \{F(X) \leq  F(x)\}=  \{X\leq  x\} 
    \quad\text{a.s.}.
  \end{equation}
\end{lem}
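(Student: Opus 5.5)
The inclusion $\{X\le x\}\subset\{F(X)\le F(x)\}$ holds surely, because $F$ is non-decreasing. So everything reduces to showing that the difference event $\{X>x,\ F(X)\le F(x)\}$ is $\P$-null. On this event $F(X)\ge F(x)$ (again by monotonicity), so in fact $F(X)=F(x)$ there. The difference event is therefore contained in
\[
\{X\in I_x\},\qquad I_x=\{s\in\R\,\colon\, s>x,\ F(s)=F(x)\},
\]
and it suffices to prove $\P(X\in I_x)=0$.

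The set $I_x$ is an interval with left endpoint $x$ (excluded), since $F$ is non-decreasing. Let $b=\sup I_x\in(x,+\infty]$ when $I_x\neq\emptyset$; if $I_x=\emptyset$ there is nothing to prove. I will write $I_x$ as an increasing union of intervals $(x,s_n]$ with $s_n\in I_x$ and $s_n\uparrow b$. If $b\in I_x$, I simply take $s_n=b$ for all $n$. For each $n$,
\[
\P(x<X\le s_n)=F(s_n)-F(x)=0 .
\]
By continuity from below of $\P$, we get $\P(X\in I_x)=\lim_n\P(x<X\le s_n)=0$. This also covers the case $b=+\infty$, where $F(x)=1$ and $\P(X>x)=0$ directly.

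The only delicate point is whether the right endpoint $b$ belongs to $I_x$. Using right-continuity of $F$ rules out a gap: if $b<\infty$ and $F(b)=F(x)$, then $b\in I_x$ and we may take $s_n=b$. If instead $F(b)>F(x)$, then $b\notin I_x$ and the union of the $(x,s_n]$ with $s_n\uparrow b$ is exactly $(x,b)=I_x$. In both cases the same limit argument applies. Hence
\[
\P\bigl(\{F(X)\le F(x)\}\cap\{X\le x\}^c\bigr)=0,
\]
which together with the sure inclusion gives the a.s.\ equality~\eqref{eq:F-2}. I expect no real obstacle; the main care is the bookkeeping of the endpoint $b$.
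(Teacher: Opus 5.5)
Your proof is correct, and it takes a more direct route than the paper's.

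The paper works with the generalized inverse $F^{-1}$. It argues that $X$ a.s.\ avoids the flat stretches $(F^{-1}(t),F^{-1}(t+))$ of $F$ and deduces the global identity $F^{-1}\circ F(X)=X$ a.s. It then concludes by combining the sure inclusion $\{X\le x\}\subset\{F(X)\le F(x)\}$ with the equality of probabilities $\P(X\le x)=\P(F^{-1}\circ F(X)\le x)=\P(F(X)\le F(x))$.

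You instead fix $x$ and place the difference event inside $\{X\in I_x\}$, where $I_x$ is the single interval of constancy of $F$ just to the right of $x$. You then show it is null using $F(s_n)-F(x)=0$ and continuity from below.

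Both arguments rest on the same fact: intervals on which $F$ is constant carry no mass. Yours has three advantages:
\begin{enumerate}[(i)]
\item It uses only one such interval.
\item It never needs to control all flat levels at once. The paper's passage to the a.s.\ identity $F^{-1}\circ F(X)=X$ implicitly uses that there are only countably many of them.
\item It avoids quantile-function relations such as \eqref{eq:F-1}. As stated for all $s,t$, that relation fails on flat stretches of $F$: for a Bernoulli$(1/2)$ variable, $F(1/2)\le 1/2$ while $F^{-1}(1/2)=0$. It therefore has to be used with a null-set correction.
\end{enumerate}
In exchange, the paper's route yields the stronger by-product $F^{-1}\circ F(X)=X$ a.s.

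One minor point: right-continuity of $F$ plays no role in your endpoint discussion. For $b<\infty$, the dichotomy $F(b)=F(x)$ or $F(b)>F(x)$ follows from monotonicity alone, so that sentence is superfluous but harmless.
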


\begin{proof} For $t \in [0,1]$, as $\P(X\in (F^{-1}(t), F^{-1}(t+)))=0$,  we deduce from~\eqref{eq:F-1} that a.s.\ $F(X)\leq t \Longleftrightarrow X\leq  F^{-1}( t)$.
  For all $s\in \R$, we get $s \geq F^{-1} \circ F(s) $ and we deduce that a.s.\
  $F^{-1} \circ F(X)=X$.
We have $\{X\leq  x\}\subset \{F(X) \leq  F(x)\}$ as well as $\P(X\leq
x)=\P(F^{-1} \circ F(X) \leq  x)=\P(F(X) \leq  F(x))$ and
thus~\eqref{eq:F-2} holds. 
\end{proof}

\begin{proof}[Proof of Lemma~\ref{lem:interpol-cop=cop}]
  Let  $X$  be a  real-valued  random  variable. Recall its discrete
  support is denoted $\Delta=\Delta(X)$. Let   $(W_x)_{x\in
    \Delta}$ be a family of  uniform  random
  variable on $[0,1]$  independent of $X$,  and set, with  $F$ the cdf of
  $X$:
\[
  Y=F(X-) +   \sum_{x\in  \Delta} \P(X=x)\, W_x \ind_{\{X=x\}}.
\]
  We recall that $Y\in [F(X-),
  F(X)]$.

Let $x\in \R$. By decomposing according to $X$ belonging or not to $\Delta$, we get
that a.s.:
\begin{align}
  \nonumber
  \{Y\leq
  F(x)\}
  &= \{F(X-) < F(x), \, X\in \Delta\} \cup\{F(X) \leq  F(x),\,
    X\not\in \Delta\}\\
  \nonumber
   &= \{F(X)\leq F(x), \, X\in \Delta\}
    \cup \{F(X) \leq  F(x),\,    X\not\in \Delta\}\\
  &= \{X\leq  x\},
    \label{eq:F-XY}
\end{align}
where  we used that  $W_x>0$ a.s.\ for the
first equality, and~\eqref{eq:F-2} for the last.

This implies  that  $\P(Y\leq  y)=y$ for all $y\in F(\R)$. 
    For $x\in  \Delta$ and $y\in [F(x-), F(x)]$,
  we also have:
\[
  \P(Y\leq    y)=\P(X<x)+    \P\left(X=x,   W_x\leq    \frac{y-F(x-)}{F(x)
      -F(x-)}\right) =y .
\]
Since
$(0,1)\subset  F(\R)\cup_{x\in \Delta}[F(x-), F(x)]
$, we  deduce that  $Y$ is  uniform on  $[0,1]$.

\medskip

This latter part implies that  the random
variables   $U_1, \ldots, U_d$ defined in~ \eqref{eq:def-U1-d} are  uniformly  distributed  over
$[0,1]$. Applying~\eqref{eq:F-XY} to
$(Z_i,U_i)$ (and $T_{i,x}$) instead of $(X,Y)$ (and $W_x$), we
get that~\eqref{eq:definition_copula_vector} holds with $C$ the cdf of
$(U_1, \ldots, U_d)$, 
 that is, $C$ is a copula for $Z$.
\end{proof}

\begin{proof}[Proof of Theorem~\ref{thm:interpolating_minimise_fdiv}]
For simplicity we write  $F_i$ for the cdf $F_{X_i}$ and $G_j$ for the
cdf $F_{Y_j}$, where $X=(X_1, \ldots, X_{d_X})$ and $Y=(Y_1, \ldots,
Y_{d_Y})$. 

We   first  prove~{\it \ref{it:copu-majoD}}.    We  consider   the  measurable
functions    $h  =    (F_1^{-1},   \dots,    F_{d_X}^{-1})$   and
$g=(G_1^{-1},  \dots ,  G_{d_Y}^{-1})$.   Let  
$(U,V)$  be a  random  variable with  cdf $C_{(X,Y)}$, and notice  that $C_X$ and $C_Y$  are the
cdf of $X $ and of $Y$.  Since $(h(U), g(V))$ is distributed as
$(X,Y)$, see~\eqref{eq:F-2}, we deduce from Corollary~\ref{cor:increase_Sf} that for $f\in \F$:
\[
    D_f(C_X \otimes C_Y \Vert C_{(X,Y)}) = S_f(U,V) \geq S_f(h(U),
    g(V)) = S_f(X,Y). 
\]

We now  prove~{\it \ref{it:copu-equalD}}.   By  assumption,  the  random  variables 
$$
T=(T_{i,x})_{i\in   \{1,   \ldots,   d_X\},  x\in   \Delta(X_i)} \text{  and }
W=(W_{j,y})_{j\in   \{1,   \ldots,   d_Y\},  y\in   \Delta(Y_j)},
$$
which appear in~\eqref{eq:def-U}
and~\eqref{eq:def-V},    are
independent       and       independent      of       $(X,Y)$.        By
Lemma~\ref{lem:interpol-cop=cop},  we  recall that  the  one-dimensional
marginals  of  the  random  vectors  $U=(U_1, \ldots,  U_{d_X})$  and
$V=( V_1, \ldots,  V_{d_Y})$ are uniformly distributed  over $[0,1]$ and
that the cdf of  $(U,V)$, of $U$ and of $V$  are an interpolating copula
for $(X,Y)$, of $X$ and of $Y$.

  We write $\tilde T=(\tilde T_{i,x})_{i\in  \{1, \ldots,  d_X\}, x \in  \Delta_{X_i} }
   $ with $\tilde  T_{i,x}= T_{i,x} \ind_{\{X_i=x\}}$, and we similarly
   define $\tilde W$. Notice there 
   exists a measurable injection $\varphi$ (resp.\ $\psi$) from $[0, 1]^{d_X}$ to $\bar \R^{d_X} \times [0, 1]^{d_X}$ (resp.\
   from $[0, 1]^{d_Y}$ to $\bar \R^{d_Y} \times [0, 1]^{d_Y}$) such that $\varphi(U)=(X ,\tilde T)$
   (resp.\ $\psi(V)=(Y, \tilde W)$). Since $\varphi$ and $\psi$ are bi-measurable, see~\cite{purves1966bimeasurable}, we deduce from
   Proposition~\ref{prop:invariance_csiszar_index}~{\it \ref{enum:bijection_invariance}} that:
$$
S_f(U,V)=S_f((X, , \tilde T), (Y, \tilde W)).
$$
Using that:
   \begin{align*}
      P_{(X,Y, \tilde T, \tilde W)} (\rd x, \rd y,
     \rd t ,\rd w)
     &=P_{(X,Y)}(\rd x, \rd y) P_{\tilde T|X=x} (\rd t) 
   P_{\tilde        W|Y=y} (\rd w) ,\\
     P_{(X, \tilde T)}\otimes P_{(Y, \tilde W)} (\rd x, \rd y,
     \rd t , \rd w)
     &=P_X\otimes P_Y (\rd x, \rd y) P_{\tilde T|X=x} (\rd t) 
   P_{\tilde      W|Y=y} (\rd w),
   \end{align*}
   we deduce from Proposition~\ref{prop:invariance_csiszar_index}~{\it \ref{it:Sf-Markov}}
 that:
 $S_f((X, \tilde T), (Y, \tilde W))=S_f(X,Y)$. This concludes the
proof.

Point~{\it \ref{it:copu-cb}} is a direct consequence of~{\it \ref{it:copu-majoD}}
and~{\it \ref{it:copu-equalD}}.
\end{proof}

\end{document}